\documentclass[11pt,a4paper,reqno]{amsart}

\setlength{\textheight}{54\baselineskip}
\setlength{\textwidth}{15cm}
\setlength{\voffset}{-3\baselineskip}

\setlength{\oddsidemargin}{21pt}
\setlength{\evensidemargin}{21pt}

\usepackage[utf8]{inputenc}

\usepackage{array,amsbsy,amscd,amsfonts,color,amssymb,amstext,amsmath,latexsym,amsthm,amscd
,multicol,graphicx,tikz}
\usepackage[english]{babel}
\usepackage{hyperref}

\newtheorem{theorem}{Theorem}
\numberwithin{theorem}{section}
\newtheorem{proposition}[theorem]{Proposition}
\newtheorem{lemma}[theorem]{Lemma}

\theoremstyle{definition}
\theoremstyle{definition}\newtheorem{definition}[theorem]{Definition}
\theoremstyle{definition}\newtheorem{remark}[theorem]{Remark}
\theoremstyle{definition}\newtheorem{example}[theorem]{Example}
\theoremstyle{definition}
\theoremstyle{definition}
\theoremstyle{definition}
\theoremstyle{definition}\newtheorem{problem}[theorem]{Problem}

\newcommand{\A}{\mathcal{A}}

\renewcommand{\H}{\mathcal{H}}

\renewcommand{\L}{\mathcal{L}}
\newcommand{\N}{\mathbb{N}}

\newcommand{\R}{\mathbb{R}}
\newcommand{\C}{\mathbb{C}}

\newcommand{\1}{\operatorname{id}}

\newcommand{\unit}{\mathbf{1}}

\newcommand{\bp}{\begin{proof}}
\newcommand{\ep}{\end{proof}}
\newcommand{\bdp}{\begin{dproof}}
\newcommand{\edp}{\end{dproof}}
\newcommand{\ra}{\rightarrow}

\newcommand{\ev}{\operatorname{ev}}

\newcommand{\Mat}{\operatorname{M}}

\newcommand{\tr}{\operatorname{tr }}

\newcommand{\rme}{\operatorname{e}}

\newcommand{\eps}{\varepsilon}

\newcommand{\diag}{\operatorname{diag}}

\renewcommand{\dim}{\operatorname{dim }}
\newcommand{\spa}{\operatorname{span }}

\newcommand{\id}{\operatorname{id}}

\newcommand{\cf}{{cf.\/}\ }

\title{Roots of Completely Positive Maps}
\author{B.V. Rajarama Bhat}
\address{Indian Statistical Institute, Stat-Math. Unit, R V College Post, Bengaluru 560059, India}
\author{Robin Hillier}
\address{Lancaster University, Department of Mathematics and Statistics, Lancaster LA1 4YF, United Kingdom}
\author{Nirupama Mallick}
\address{The Institute of Mathematical Sciences, IV Cross Road, CIT Campus, Taramani, Chennai 600113, India}
\author{Vijaya Kumar U.}
\address{Indian Statistical Institute, Stat-Math Unit, R V College Post, Bengaluru 560059, India}
\date{5 November 2019}

\keywords{complete positivity; divisibility; Markov chains; matrix algebras; operator algebras; quantum information}
\subjclass{primary: 46L57; secondary: 60J10, 81P45}

\begin{document}

\begin{abstract}
We introduce the concept of completely positive roots of completely positive maps on operator algebras. We do this in different forms: as asymptotic roots, proper discrete roots and as continuous one-parameter semigroups of roots. We present structural and general existence and non-existence results, some special examples in settings where we understand the situation better, and several challenging open problems. Our study is closely related to Elfving's embedding problem in classical probability and the divisibility problem of quantum channels.
\end{abstract}

\maketitle

\section{Introduction}

In many mathematical settings, the concept of a square-root or
higher order roots is familiar, e.g.~in the context of real
numbers, in the context of matrices, in the context of
real-valued functions, or measures. What all of these settings have in common is the underlying structure of a semigroup. To be slightly more formal, given a semigroup $(A,\star)$
and given $a\in A$ and $n\in \mathbb{N}$, we can ask whether there exists some $x\in A$ such that $a=
x\star x \star \cdots \star x$ ($x$ appearing $n$ times). Then we
may call $x$ an $n$-th root of $a$. If such a root exists for all
$n$, we would call $a$ infinitely divisible. We may also ask whether there is a one-parameter semigroup $(x_{t})_{t\in\R_+}$ in $A$ (namely $x_{s+t}=x_s\star x_t$, for all $s,t\in
\mathbb{R}_+$), such that $x_{t_0}=a$ for some $t_0>0.$ If this is the case then $a$ may be called embeddable (into a continuous semigroup). Finally, if there is a topology on $A$, we may also look for asymptotic roots or
asymptotic embeddability, that is, whether there is a one parameter
semigroup $(x_t)_{t\in\R_+}$, with $\lim _{t\ra\infty}x_t=a$.

Yuan \cite{Yu} deals with some of these questions in the full generality of topological semigroups, but without further structure it seems that one cannot say too much. In the present paper we would like to add such structure and look at unital normal completely positive (UNCP) maps on von Neumann algebras. They arise in many ways in operator algebras and quantum physics and are natural objects to
study (\cf \cite{EK,Sto,Wo}). Since  `composition' is an associative
operation on UNCP maps, it makes sense to study the question of roots
also in this setting, namely: given a von Neumann algebra $\A$, a number
$n\in\N$, and a UNCP map $\phi:\A\ra \A$, is there
another UNCP map $\psi:\A\ra \A$ such that $\phi =
\psi^n$? It turns out that currently surprisingly little is known in general.

However, there are a number of connection points with results in related areas. For example, we could specialize to commutative algebras. Then UNCP maps become stochastic maps of Markov chains in classical probability theory. A notable special case is that of (discrete or continuous) convolution semigroups of probability measures. Existence and uniqueness criteria for roots of
stochastic maps have been studied earlier (see e.g. \cite{HP,HL}).
Suppose a given Markov chain on a countable state space converges to an invariant distribution (an absorbing state). Then typically such a convergence happens exponentially (i.e., asymptotically) over time. In discrete time there are some instances when the convergence takes place in finite time.  An analysis of transition probability matrices of
such Markov chains can be seen in \cite{BG} and \cite{Su}. This does not work in continuous time if the semigroup generator is bounded, \cite{GI}.  But the condition of boundedness of the generator might be
too strong, and it is widely believed that under some very minimal continuity
assumptions on the transition semigroup, convergence in finite time should be impossible. However, we are not aware of any proof. Surprisingly the noncommutative counterpart is more involved and convergence in finite continuous time to a given pure state is indeed possible as has been shown in \cite{Bhat12}; more precisely, for a given normal pure state $\varphi$ on $B(\H)$, identified with the completely positive map
$\phi=\varphi(\cdot)\unit$, it is possible to construct a quantum
Markov semigroup $(\tau _t)_{t\in \mathbb{R}_+}$ (a strongly
continuous one-parameter semigroup of UNCP maps) on $B(\H)$ which coincides with $\phi$ at all times
$t\ge 1$; in other words, for all states $\psi $,  we get
$\psi\circ\tau_t = \varphi$, for $t\ge 1$. Hence convergence in the
continuous setting is possible in finite time, for all
pure states on $B(\H).$ A trivial consequence is that $\phi $  has
an $n$-th root, for every $n\in\N$. It is natural to ask what happens in the case  of
$\phi=\varphi(\cdot)\unit$ where $\varphi$ is a mixed state. What
can be said about $n$-th roots or semigroups of roots of $\phi$? And
in light of our first observation, what can be said about $n$-th
roots and semigroups of roots of more general completely positive
maps $\phi$, not only those arising from states?

If we drop the assumption of convergence to an invariant distribution, many things can happen. E.g.~the question of continuous roots of a given stochastic map makes sense here and is known under the name of Elfving's embedding problem, dating back to 1937 \cite{Elf}: given a stochastic map $S$, when can we find a map $L$ such that $\rme^{L}=S$ and all $\rme^{tL}$, for $t\ge 0$, are stochastic maps? A number of necessary and sufficient criteria have been found over the years, see e.g.~\cite{Dani,Dav2,Kin,vB} for a non-exhaustive list. One particularly interesting condition is: if $S$ is infinitely divisible, i.e. it has $n$-th roots for all $n$, then it is embeddable into a semigroup \cite{Kin}. The noncommutative analogue of this question is not so easy but we may restrict ourselves to finite dimensions to start with. Indeed it should be noted that completely positive (trace-preserving) maps in finite dimensions form the basis of quantum information theory (there termed ``quantum channels" \cite{Wo}). The question of asymptotic behavior of sequences of compositions of quantum channels appears relevant in quantum information problems, e.g.~in the context of entanglement breaking maps \cite{RJP}, and the question of ``divisibility" of quantum channels has also been studied in a  few places. A number of divisibility criteria can be found in \cite{Den,WC,WECC,BC} and some of the questions we pose here have also been discussed in \cite{WC,BC} but with slightly different terminology and complementary answers. Most notably though, it has been shown that the complexity of the problems of deciding whether a given quantum channel (or stochastic map) has a square-root or whether it is embeddable into a continuous semigroup are NP-hard \cite{CEW1,CEW2,BC}. This means that the set of such quantum channels has no simple expression other than explicit enumeration of its elements, and it is impossible to find a simply verifiable criterion for the existence of such roots as in the case of positive numbers or matrices, for example. However, this should not discourage us from looking for interesting new relations or characterizations, at least for some special classes of UNCP maps, and that is what we would like to do here.

Our outline is as follows. In Section \ref{sec:asymptotic}, we start by describing the quantum analogue and generalization of the ``exponential" convergence to a given invariant distribution: given a UNCP map, is there a continuous one-parameter semigroup that converges to $\phi$ as $t\ra\infty$? We completely clarify this question. Such semigroups we will call asymptotic continuous roots. As a byproduct we obtain an affirmative answer to a question of Arveson (Problem 3 in \cite[p.387]{Arv-book}) through very elementary methods.

We then move on to the question of proper roots in the finite-time setting, where Section \ref{sec:discrete} deals with the $n$-th root case while Section \ref{sec:continuous} deals with the continuous semigroup case. We are able to provide several existence and non-existence results under different additional assumptions, e.g.~regarding the dimension or structure of the algebra or the range of the CP map. In particular, for the case of states on $\Mat_d$ or $B(\H)$ or $\C^d$ we have a complete characterization of existence of $n$-th roots. However, we are still far from a full understanding and have to leave some questions in the form of conjectures and difficult open problems. We hope some readers will feel stimulated to think about them and come up with nice solutions. Our diversity of results together with the few related results in literature, e.g.~\cite{HL,CEW1,CEW2,BC}, indicate that a ``complete and elegant" characterization is unlikely to be found though.

In this paper we are mainly concerned with the existence or non-existence of roots of UNCP maps. Whenever such roots exist, they are typically far from unique, and a subsequent natural question would be to find a useful characterization of all such roots for a given UNCP map. We deal with UNCP maps (in finite and infinite dimension) and to some minor extent with the commutative special case of stochastic maps though we do not look at the related question of nonnegative roots of (entry-wise) nonnegative matrices as can be found in other places, e.g.~\cite{Mi}.

Throughout this paper, all our C*-algebras or von Neumann algebras are supposed to live on some Hilbert space, and all Hilbert spaces here are taken as complex and separable, sometimes even finite-dimensional, with scalar products linear in the second variable. $B(\H)$ will denote the bounded linear operators on the Hilbert space $\H$ and $\Mat_d$ the $d\times d$ matrices with complex entries; $\N$ denotes the natural numbers without 0. For an introduction to completely positive maps we refer the reader to any good textbook on the topic, e.g.~\cite{EK,Sto,Tak}.

\section{Asymptotic roots}\label{sec:asymptotic}

In the present section we work in the C*-algebraic setting because it appeared more natural to us; however, everything can be adjusted and translated in a straight-forward way to the von Neumann algebraic setting, cf.~also Remark \ref{rem:asymptotic} below, which would also bring it more in line with the subsequent sections.

\begin{definition}\label{def:asymptotic}
Given a unital C*-algebra $A$ and a bounded unital completely positive (UCP) map $\phi:A\ra A$,
\begin{itemize}
\item[(ad)] an \emph{asymptotic discrete root} of $\phi$ is a UCP map $\tau:A\ra A$ such that $\tau^n\ra \phi$ (pointwise in norm), as $n\ra \infty$;
\item[(ac)] an \emph{asymptotic continuous root} of $\phi$ is a uniformly continuous one-parameter semigroup $(\tau_t)_{t\ge 0}$ of UCP maps on $A$ such that $\tau_t\ra \phi$ (pointwise in norm), as $t\ra \infty$.
\end{itemize}
\end{definition}

We then have:

\begin{theorem}\label{th:asymptotic}
Let $A$ be a unital C*-algebra and $\phi$ a UCP map of $A$. Then the following three statements are equivalent:
\begin{itemize}
\item[(i)] $\phi$ is idempotent, i.e., $\phi^2=\phi$;
\item[(ii)] $\phi$ has an asymptotic continuous root;
\item[(iii)] $\phi$ has an asymptotic discrete root.
\end{itemize}
\end{theorem}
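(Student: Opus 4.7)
The plan is to prove the three implications cyclically: (i) $\Rightarrow$ (ii) $\Rightarrow$ (iii) $\Rightarrow$ (i).

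For (i) $\Rightarrow$ (ii), I would guess an explicit semigroup built out of $\phi$ and the identity. Since $\phi$ is idempotent, the map
\[
\tau_t \;:=\; e^{-t}\,\id + (1-e^{-t})\,\phi, \qquad t\ge 0,
\]
is a convex combination of UCP maps, hence UCP. Using $\phi^2=\phi$, a direct expansion of $\tau_t\tau_s$ collapses the $\phi^2$ term and yields $\tau_{t+s}$, so $(\tau_t)_{t\ge 0}$ is a one-parameter semigroup; uniform continuity is obvious from the explicit formula. Finally $\tau_t(a)-\phi(a)=e^{-t}(a-\phi(a))$ goes to $0$ in norm (indeed uniformly on bounded sets), providing the asymptotic convergence.

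For (ii) $\Rightarrow$ (iii), I simply take $\tau:=\tau_1$: then $\tau^n=\tau_n$, which is a subsequence of the net $(\tau_t)_{t\ge 0}$ and thus converges to $\phi$ pointwise in norm.

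The interesting step is (iii) $\Rightarrow$ (i). Here I would exploit the fact that every UCP map on a unital C*-algebra is a contraction, and combine this with the pointwise convergence $\tau^n\to\phi$. First, by continuity of $\tau$, applying $\tau$ to the relation $\tau^n(a)\to\phi(a)$ and using $\tau^{n+1}(a)\to\phi(a)$ yields $\tau\phi=\phi$; symmetrically one gets $\phi\tau=\phi$, though this is not strictly needed. The main trick is to show $\tau^{2n}\to\phi^2$ pointwise in norm: one writes
\[
\tau^{2n}(a)-\phi^2(a)\;=\;\tau^n\bigl(\tau^n(a)-\phi(a)\bigr)\;+\;\bigl(\tau^n(\phi(a))-\phi(\phi(a))\bigr),
\]
and estimates the first summand by $\|\tau^n(a)-\phi(a)\|$ using contractivity of $\tau^n$, while the second summand tends to $0$ by applying the pointwise convergence hypothesis at the point $\phi(a)$. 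On the other hand $(\tau^{2n})$ is a subsequence of $(\tau^n)$, so it also converges pointwise in norm to $\phi$. Uniqueness of limits then gives $\phi^2=\phi$.

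The only genuinely delicate point is this last limit-swap in (iii) $\Rightarrow$ (i); everything else is either an explicit construction or a trivial specialization. Contractivity of UCP maps is precisely what makes the swap work, which is why the statement is clean in the C*-setting (and, as Remark \ref{rem:asymptotic} will presumably indicate, transfers to the von Neumann algebraic setting without essential change).
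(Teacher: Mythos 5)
Your proof is correct and follows essentially the same route as the paper: the semigroup you write down, $\tau_t=e^{-t}\id+(1-e^{-t})\phi$, is exactly the one the paper obtains by exponentiating the generator $\L=\phi-\id$ (you just verify the semigroup law directly instead of invoking the generation theorem for conditionally completely positive maps), and your (iii)$\Rightarrow$(i) argument is a minor rearrangement of the paper's limit manipulation $\phi\circ\tau^n=\lim_m\tau^{m+n}=\phi$ followed by $\phi^2=\lim_m\phi\circ\tau^m=\phi$, with contractivity of $\tau^n$ playing the role that boundedness of $\phi$ plays there. No gaps.
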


\begin{proof}
(i) $\Rightarrow$ (ii). Suppose $\phi^2=\phi$. Then define the map
\[
\L:= \phi-\id = -(\id-\phi): A\ra A,
\]
which is bounded and conditionally completely positive \cite[Sec.4.5]{EK} and therefore generates a uniformly continuous UCP semigroup $\tau$. We find
\[
\L^n(x) = (-1)^n (\id-\phi)^n(x)= (-1)^n (\id-\phi)(x), \quad x\in A,
\]
and therefore
\[
\tau_t(x) = \sum_{n=0}^\infty \frac{(-t)^n (\id-\phi)^n}{n!} x
= \sum_{n=0}^\infty \frac{(-t)^n}{n!} (\id-\phi) (x) + \phi(x)
= \rme^{-t} (\id-\phi)(x) + \phi(x).
\]
We see
\begin{equation}\label{eq:asymptotic}
\|\tau_t(x)-\phi(x)\| = \rme^{-t} \| x-\phi(x)\| \le \rme^{-t} (1+\|\phi\|) \|x\|, \quad x\in A,
\end{equation}
so $\tau_t\ra \phi$ uniformly, as $t\ra\infty$, so $\tau$ is an asymptotic continuous root for $\phi$.

(ii) $\Rightarrow$ (iii) is obvious.

(iii) $\Rightarrow$ (i). Suppose now that there is an asymptotic discrete root $\tau$ of $\phi$. Then the fact that $\tau^m \ra \phi$ as $m\ra\infty$ allows us to make the following manipulations:
\[
\phi\circ\tau^n(x)= \lim_{m\ra\infty} \tau^m \circ \tau^n(x)= \lim_{n+m\ra \infty} \tau^{n+m} (x)= \phi(x), \quad x\in A,
\]
and therefore
\[
\phi^2 (x)= \lim_{m\ra\infty} \phi\circ \tau^m (x)= \lim_{m\ra\infty} \phi (x)= \phi(x), \quad x\in A,
\]
so $\phi^2=\phi$.
\end{proof}

\begin{remark}\label{rem:asymptotic}
Let $A$ be a unital C*-algebra and $\phi$ a UCP map of $A$.
\begin{itemize}
\item[(i)] An asymptotic root of $\phi$ is in general not unique.
\item[(ii)] We did not specify the dimension of $A$ and the Hilbert space $\H$ on which it acts. In fact, the statements are interesting in both finite and infinite dimensions.
\item[(iii)] If $\phi$ has an asymptotic (discrete/continuous) root with respect to the strong operator topology then the above proof shows that $\phi$ also has an asymptotic (discrete/continuous) root with respect to the uniform topology.
\item[(iv)]  The definition, theorem and proof continue to hold true upon replacing C*-algebras by von Neumann algebras, UCP by UNCP, and the uniform by the strong operator topology.
\end{itemize}
\end{remark}

\begin{remark}
As a byproduct, the theorem answers Problem 3 in \cite[p.387]{Arv-book} affirmatively, namely given an eigenvalue list $(\lambda_1,\lambda_2,\ldots)$ with $0\le\lambda_i\le 1$ and $\sum_i \lambda_i=1$ as in \cite[Sec.12.4]{Arv-book}, consider the density matrix
\[
D:= \diag(\lambda_1,\lambda_2,\ldots)\in B(\H),
\]
the normal state $\varphi=\tr(D\cdot)$ on $B(\H)$, and the UNCP map
\[
\phi = \varphi(\cdot) \unit: B(\H) \ra B(\H).
\]
Then the asymptotic root in the above proof is a UNCP semigroup with bounded generator that has $\varphi$ as absorbing state: for every normal state $\psi$ on $B(\H)$ and every $x\in B(\H)$, we get
\[
|\psi\circ\tau_t(x) - \varphi(x) | = | \psi(\tau_t(x)-\varphi(x)\unit)|
\le \| \tau_t(x)-\varphi(x)\unit \| \le 2 \rme^{-t} \|x\|
\]
where the last inequality follows from \eqref{eq:asymptotic}; thus,
\[
\|\psi\circ\tau_t - \varphi \| \ra 0, \quad t\ra\infty,
\]
meaning that $\varphi$ is an absorbing state for $(\tau_t)_{t\ge 0}$, which answers Problem 3 in \cite[p.387]{Arv-book}.
\end{remark}

\section{Proper discrete roots}\label{sec:discrete}

In this and the following section, we work exclusively with von Neumann algebras.

\subsection{General statements}

Our fundamental definition is the following:

\begin{definition}\label{def:discrete}
Given a von Neumann algebra $\A$, a UNCP map $\phi:\A\ra \A$ and an integer $n\in\N\setminus\{1\}$, a \emph{proper $n$-th discrete root} of $\phi$ is a UNCP map $\tau:\A\ra \A$ such that $\tau^n= \phi$ and $\tau^k\not=\phi$ for all $k<n$. We call $n$ the \emph{order} of $\tau$.
\end{definition}

We need a notational convenience which turns out to be crucial in many proofs and characterizations:

\begin{definition}\label{def:support-rank}
For every UNCP map $\phi$ on a von Neumann algebra $\A$, we define the \emph{support projection} as the smallest projection $p_\phi\in\A$ such that $\phi(p_\phi)=\unit$. We write $p_\phi':=\unit -p_\phi\in\A$.
\end{definition}

The existence and the uniqueness of $p_\phi$ follow from \cite[Prop.I.4.3, p.63]{Dix}, roughly as follows: one first realizes that the set of $x\in\A$ such that $\phi(x^*x)= 0$ forms a $\sigma$-weakly closed left ideal in $\A$. For such ideals there exists a maximal projection $p$ such that the ideal consists of all $x\in\A$ with $x=xp$. This is exactly the projection $p=p_\phi'=\unit-p_\phi$ from the preceding definition.

We use the following block matrix decomposition of $x\in \A$:
\begin{equation}\label{eq:block-dec}
x = \begin{pmatrix}
 x_{11} & x_{12} \\ x_{21} & x_{22}
 \end{pmatrix}
=\begin{pmatrix}
  p_\phi x p_\phi & p_\phi x p_\phi' \\ p_\phi' x p_\phi & p_\phi' x p_\phi'
 \end{pmatrix}
.
\end{equation}

A first useful fact is the following variation of \cite[Th.4.2]{BM} about the relation with nilpotent NCP maps:

\begin{lemma}\label{lem:nilpotent}
Let $\A$ be a von Neumann algebra, $\phi$ a UNCP map of $\A$ and $n\in\N$. Suppose there exists a proper $n$-th discrete root $\tau$ of $\phi$. Then
\begin{itemize}
\item[(i)] $\tau(p_\phi) \ge p_\phi$;
\item[(ii)]  there also exists a nilpotent NCP map $\alpha: p_\phi' \A p_\phi' \ra p_\phi' \A p_\phi'$ of order at most $n$ such that
\[
 \tau\begin{pmatrix}
  0 & 0 \\ 0 & x
 \end{pmatrix}
 =\begin{pmatrix}
  0 & 0 \\ 0 & \alpha(x)
 \end{pmatrix},
 \quad x\in p_\phi' \A p_\phi';
\]
\item[(iii)] for every $\begin{pmatrix}
0 & x \\ y & z
\end{pmatrix}\in \A
$ w.r.t.~to the above block decomposition, there is $\begin{pmatrix}
0 & x' \\ y' & z'
\end{pmatrix} \in \A
$
such that
\[
\tau\begin{pmatrix}
  0 & x \\ y & z
 \end{pmatrix}
 =\begin{pmatrix}
  0 & x' \\ y' & z'
 \end{pmatrix};
\]
\item[(iv)] $p_\phi \tau(\cdot)p_\phi$ restricts to a proper discrete root of $p_\phi \phi(\cdot )p_\phi $ on $p_\phi \A p_\phi$ of order at most $n$.
\end{itemize}
\end{lemma}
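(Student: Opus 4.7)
My plan rests on two structural observations: first, $\tau$ and $\phi$ commute, since $\tau\circ\phi=\tau^{n+1}=\phi\circ\tau$; second, complete positivity of $\tau$ provides $2\times 2$-matrix positivity, which together with the defining property of $p_\phi$ (that $p_\phi'$ is maximal among projections with $\phi$-image $0$, equivalently $\A p_\phi'=\{x\in\A:\phi(x^*x)=0\}$) controls how $\tau$ acts on the block decomposition \eqref{eq:block-dec}. For (i), the commutation yields $\phi(\tau(p_\phi'))=\tau(\phi(p_\phi'))=0$; since $\tau(p_\phi')\ge 0$, its square root lies in $\A p_\phi'$, so $\tau(p_\phi')p_\phi=0$, i.e.\ $\tau(p_\phi')\le p_\phi'$ and equivalently $\tau(p_\phi)\ge p_\phi$. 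For (ii), any positive $x\in p_\phi'\A p_\phi'$ satisfies $x\le \|x\|p_\phi'$, hence by (i) $0\le \tau(x)\le \|x\|\tau(p_\phi')\le \|x\|p_\phi'$, so $\tau$ maps positive elements of the corner into itself; linear extension defines a normal completely positive $\alpha:=\tau|_{p_\phi'\A p_\phi'}$ with $\alpha^n=\phi|_{p_\phi'\A p_\phi'}$. A standard $2\times 2$ Schwarz argument applied to $\begin{pmatrix} p_\phi' & p_\phi' b \\ b^*p_\phi' & b^*b\end{pmatrix}\ge 0$, together with $\phi(p_\phi')=0$ and the fact that a positive $2\times 2$ matrix with one zero diagonal entry has vanishing off-diagonals, forces $\phi(p_\phi' b)=0$ for all $b\in\A$; hence $\phi$ vanishes on $p_\phi'\A p_\phi'$ and $\alpha^n=0$.

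The main obstacle is (iii), which I expect requires picking the right positive $2\times 2$ matrix, now for $\tau$ rather than $\phi$. Suppose $p_\phi a p_\phi=0$ and decompose $a = p_\phi a p_\phi' + p_\phi' a p_\phi + p_\phi' a p_\phi'$; the last summand is handled by (ii). For the off-diagonal block $c := p_\phi a p_\phi'$, form the rank-one positive
\[
\begin{pmatrix} cc^* & c \\ c^* & p_\phi' \end{pmatrix}=\begin{pmatrix} c \\ p_\phi'\end{pmatrix}\begin{pmatrix} c^* & p_\phi'\end{pmatrix}\ge 0,
\]
apply $\tau$ entrywise (positivity preserved by complete positivity), and conjugate by $\diag(p_\phi,p_\phi)$. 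The $(2,2)$-entry becomes $p_\phi\tau(p_\phi')p_\phi=0$ by (i), forcing the off-diagonal $p_\phi\tau(c)p_\phi$ to vanish by the same $2\times 2$-positivity fact. A symmetric choice handles $p_\phi' a p_\phi$, and altogether $p_\phi\tau(a)p_\phi=0$.

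For (iv), define $\tilde\tau := p_\phi\tau(\cdot)p_\phi$ on $p_\phi\A p_\phi$; it is normal and completely positive by construction, and unital because (i) gives $p_\phi \le p_\phi\tau(p_\phi)p_\phi \le p_\phi$. I would then prove by induction that $\tilde\tau^k(x)=p_\phi\tau^k(x)p_\phi$ for all $x\in p_\phi\A p_\phi$ and $k\ge 1$: writing $\tau^k(x) = \tilde\tau^k(x) + r_k$ using the inductive hypothesis, we have $p_\phi r_k p_\phi = 0$, so by (iii) $p_\phi\tau(r_k)p_\phi = 0$ and
\[
\tilde\tau^{k+1}(x) \;=\; p_\phi\tau(\tilde\tau^k(x))p_\phi \;=\; p_\phi\tau(\tau^k(x))p_\phi \;=\; p_\phi\tau^{k+1}(x)p_\phi.
\]
Setting $k=n$ gives $\tilde\tau^n = p_\phi\phi(\cdot)p_\phi$ on $p_\phi\A p_\phi$, so $\tilde\tau$ is a discrete root of $p_\phi\phi(\cdot)p_\phi$ of order at most $n$. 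Once the $2\times 2$ trick in (iii) is in hand, everything else is essentially bookkeeping on how the blocks interact.
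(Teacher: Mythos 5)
Your proof is correct and follows the same overall architecture as the paper's (the commutation $\tau\circ\phi=\tau^{n+1}=\phi\circ\tau$, the block decomposition, and $2\times 2$ operator-matrix positivity), but the two technically substantive steps are carried out with different tools. For (i), the paper sets $b:=p_\phi-p_\phi\tau(p_\phi)p_\phi\ge 0$, shows $\phi(b)=0$, and then passes to the spectral support projection $e_b$ of $b$ and the uniqueness of $p_\phi$ to force $b=0$; you instead feed $\tau(p_\phi')^{1/2}$ directly into the left-ideal characterization $\{x:\phi(x^*x)=0\}=\A p_\phi'$, which gives $\tau(p_\phi')p_\phi=0$ and hence $\tau(p_\phi')\le p_\phi'$ without the functional-calculus detour --- a shorter route to the same conclusion. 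For (iii), the paper applies the Kadison--Schwarz inequality $\tau(y)^*\tau(y)\le\tau(y^*y)$ to $y=\begin{pmatrix}0&x\\0&0\end{pmatrix}$, obtaining $T^*T\le p_\phi'$ and thus $Tp_\phi=0$, which kills the whole first block column of the image; your rank-one matrix $\begin{pmatrix}cc^*&c\\c^*&p_\phi'\end{pmatrix}$ compressed by $\diag(p_\phi,p_\phi)$ yields only the vanishing of $p_\phi\tau(c)p_\phi$, which is precisely what (iii) asserts and all that the induction in (iv) requires. Parts (ii) and (iv) coincide with the paper's argument, your induction in (iv) being a spelled-out version of the paper's one-line appeal to (iii). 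I see no gaps.
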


\begin{proof}

(i). We first notice that $\tau(p_\phi)\le \unit$ since $\tau$ was assumed to be UNCP. Therefore $0\le p_\phi \tau(p_\phi) p_\phi \le p_\phi$. Let us write $b:= p_\phi-p_\phi \tau(p_\phi) p_\phi \ge 0$. We would like to show that $b=0$.

To start with,
\[
\tau\circ\phi= \tau\circ \tau^n = \tau^{n+1} = \tau^n\circ\tau=\phi\circ\tau.
\]
This implies
\[
\phi(p_\phi)=\unit= \tau(\unit) = \tau(\phi (p_\phi)) = \phi(\tau (p_\phi)) = \phi(p_\phi\tau(p_\phi)p_\phi),
\]
thus
\[
\phi(b)= \phi(p_\phi - p_\phi\tau(p_\phi)p_\phi) = 0.
\]

Let $e_b\in \A$ be the support projection of $b$, which can be defined through Borel functional calculus. Notice that $e_b\le p_\phi$ because $b\le p_\phi$, so $p_\phi-e_b$ is a subprojection of $p_\phi$. Then it follows from the construction of the spectral theorem (in projection-valued measures form \cite[Sec.7.3]{RS}) that $\phi(b)=0$ if and only if $\phi(e_b)= 0$. Since we have already proved $\phi(b) = 0$, we find
\[
\phi(p_\phi-e_b) = \phi(p_\phi) - \phi(e_b) = \unit - 0 = \unit.
\]
Thus $p_\phi-e_b$ fulfills the properties of a support projection of $\phi$ and therefore must be equal to $p_\phi$ due to its uniqueness, so $e_b=0$, hence $b=0$.

(ii). Unitality of $\tau$ together with part (i) implies $\tau(p_\phi')\le p_\phi'$. Thus, $\tau(p_\phi' \cdot p_\phi')$ is a NCP map with image in $p_\phi'\A p_\phi'$, hence giving rise to an NCP map
\[
\alpha:= \tau\restriction _{p_\phi'\A p_\phi'}:  p_\phi'\A p_\phi'\ra p_\phi'\A p_\phi'.
\]
Since $\tau$ is an $n$-th root of $\phi$, we have
\[
 0 = \phi\begin{pmatrix}
  0 & 0 \\ 0 & x
 \end{pmatrix}
 = \tau^n \begin{pmatrix}
  0 & 0 \\ 0 & x
 \end{pmatrix}
 =\begin{pmatrix}
  0 & 0 \\ 0 & \alpha^n(x)
 \end{pmatrix},
 \quad x\in p_\phi'\A p_\phi',
\]
implying that $\alpha$ is nilpotent of order at most $n$.

(iii) Part (i) shows that $\tau(p_\phi') \le p_\phi'$. Using the block decomposition in \eqref{eq:block-dec}, we can write
\[
 \begin{pmatrix}
  0 & 0 \\ 0 & \unit
 \end{pmatrix}
 \ge \tau\begin{pmatrix}
  0 & 0 \\ 0 & \unit
 \end{pmatrix}
 \ge \tau\Big[ \begin{pmatrix}
  0 & 0 \\ x^* & 0
 \end{pmatrix}
 \begin{pmatrix}
  0 & x \\ 0 & 0
 \end{pmatrix} \Big]
 \ge \tau\begin{pmatrix}
  0 & x \\ 0 & 0
 \end{pmatrix}^*
 \tau\begin{pmatrix}
  0 & x \\ 0 & 0
 \end{pmatrix},
\]
for every $x\in p_\phi \A p_\phi'$ with $x^*x\le p_\phi'$. This means that
\[
 \tau\begin{pmatrix}
  0 & x \\ 0 & 0
 \end{pmatrix}
 =\begin{pmatrix}
  0 & x' \\ 0 & z'
 \end{pmatrix}
\]
with certain $x'\in p_\phi \A p_\phi', z'\in p_\phi' \A p_\phi'$. Together with part (ii) and the self-adjointness of $\tau$, we have, for any $x\in p_\phi \A p_\phi',y\in p_\phi' \A p_\phi, z\in p_\phi' \A p_\phi'$:
\[
 \tau\begin{pmatrix}
  0 & x \\ y & z
 \end{pmatrix}
 =\begin{pmatrix}
  0 & x' \\ y' & z'
 \end{pmatrix}
\]
with certain $x'\in p_\phi \A p_\phi',y'\in p_\phi' \A p_\phi, z'\in p_\phi' \A p_\phi'$.

(iv) Since $p_\phi\in\A$ and $p_\phi\phi(p_\phi)p_\phi = p_\phi= p_\phi\tau(p_\phi)p_\phi$ by part (i), it is clear that both $p_\phi\phi(\cdot)p_\phi$ and $p_\phi\tau(\cdot)p_\phi$ restrict to UNCP maps on $p_\phi\A p_\phi$. Moreover, it follows from part (iii) that
\[
p_\phi\tau(p_\phi\tau(\cdot)p_\phi)p_\phi = p_\phi\tau^2(\cdot)p_\phi,
\]
and by induction, since $\tau^n=\phi$, we get that $p_\phi\tau(\cdot)p_\phi$ is a proper discrete root of $p_\phi\phi(\cdot)p_\phi$ of order at most $n$.
\end{proof}

If $\phi$ is idempotent then there is generally more hope to say something about roots. A particularly nice case of idempotency is that where $\phi$ has rank one, namely $\phi=\varphi(\cdot)\unit$ for some normal state $\varphi$ on $\A$. In that case, we get the following easy correspondence:

\begin{lemma}\label{lem:discrete-rootnilpotent}
Given a von Neumann algebra $\A$, a normal state $\varphi$ on $\A$
and $n\in\N$, let $\phi=\varphi(\cdot)\unit$, which is UNCP. Then a
 map $\tau$ on $\A$ is a proper $n$-th discrete root of $\phi$ if
and only if $\tau=\phi+\alpha$ with $\alpha$ some normal nilpotent
map of order $n$ such that $\alpha\circ\phi=0=\phi\circ\alpha$ and
$\phi+\alpha$ is CP.
\end{lemma}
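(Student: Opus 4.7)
The plan is to prove the equivalence by direct computation, exploiting two structural features: first, the rank-one form of $\phi$ forces $\phi^2=\phi$ (because $\varphi(\unit)=1$), and second, under the mutual annihilation conditions $\alpha\circ\phi=\phi\circ\alpha=0$ every cross term in the binomial expansion of $(\phi+\alpha)^k$ vanishes, giving the clean identity $\tau^k=\phi+\alpha^k$ for all $k\ge 1$. This collapse is the engine of the whole argument and translates ``proper $n$-th root'' into ``nilpotent of order exactly $n$''.

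For the forward direction, I would start from the hypothesis $\tau^n=\phi$ and derive the two absorption identities $\tau\circ\phi=\phi$ and $\phi\circ\tau=\phi$. The first is automatic from unitality of $\tau$: $\tau(\varphi(x)\unit)=\varphi(x)\unit=\phi(x)$. The second follows by composing both sides of $\tau^n=\phi$ with $\tau$: $\phi\circ\tau=\tau^n\circ\tau=\tau^{n+1}=\tau\circ\tau^n=\tau\circ\phi=\phi$. Setting $\alpha:=\tau-\phi$, normality of $\alpha$ is inherited from the normality of $\tau$ and $\phi$, and a direct computation using $\phi^2=\phi$ gives $\alpha\circ\phi=\tau\circ\phi-\phi^2=0$ and similarly $\phi\circ\alpha=0$. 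A short induction then yields $\tau^k=\phi+\alpha^k$ for every $k\ge 1$, so the conditions $\tau^n=\phi$ and $\tau^k\neq\phi$ for all $k<n$ are equivalent to $\alpha^n=0$ and $\alpha^k\neq 0$ for $1\le k<n$, i.e.~nilpotency of $\alpha$ of order exactly $n$. Complete positivity of $\phi+\alpha=\tau$ is built into the hypothesis.

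For the converse, assume $\tau:=\phi+\alpha$ with the stated properties. Unitality is the one verification needing a small manoeuvre: applying $\alpha\circ\phi=0$ to $\unit$ gives $0=\alpha(\phi(\unit))=\alpha(\unit)$, so $\tau(\unit)=\phi(\unit)+\alpha(\unit)=\unit$. Normality and complete positivity of $\tau$ are immediate from the hypotheses. The same binomial collapse as in the forward direction gives $\tau^k=\phi+\alpha^k$ for all $k\ge 1$, so $\tau^n=\phi+0=\phi$ while $\tau^k=\phi+\alpha^k\neq\phi$ for $1\le k<n$ (and of course $\tau^0=\id\neq\phi$ in the non-trivial cases), so $\tau$ is indeed a proper $n$-th discrete root of $\phi$.

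I do not expect a real obstacle here; the entire argument is algebraic once one has recorded $\phi^2=\phi$ and the two-sided orthogonality $\alpha\circ\phi=\phi\circ\alpha=0$. The only points worth flagging for care are (a) deducing $\tau(\unit)=\unit$ in the converse from the weaker-looking hypothesis $\alpha\circ\phi=0$, and (b) matching the convention: ``proper root of order $n$'' on the $\tau$ side corresponds to ``nilpotent of order exactly $n$'' (that is, $\alpha^{n-1}\neq 0$ and $\alpha^n=0$) on the $\alpha$ side.
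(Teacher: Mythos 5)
Your proof is correct and follows essentially the same route as the paper's: set $\alpha=\tau-\phi$, use unitality to get $\tau\circ\phi=\phi$ and the root equation to get $\phi\circ\tau=\phi$, and deduce the key identity $\tau^k=\phi+\alpha^k$ for all $k\ge1$, which translates the proper-root condition into nilpotency of order exactly $n$. You simply spell out in more detail the induction and the converse (which the paper dismisses as trivial), including the useful observation that $\alpha(\unit)=\alpha(\phi(\unit))=0$ gives unitality of $\tau$.
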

\begin{proof}
    $(\Rightarrow)$
    Consider $\alpha:=\tau-\phi.$ Clearly $\alpha$ is normal. Since $\phi\circ \tau=\tau\circ \phi=\tau^{n+1}=\phi$ we have, for all $k\ge 1$
    $\alpha^k=\tau^k-\phi.$ Now since $\tau^k\ne \phi$ for $k<n$ and $\tau^n=\phi$ we have $\alpha^k\ne 0$ for $k<n$ and $\alpha^n=0.$ i.e., $\alpha$ is nilpotent  of order $n. $  Also $\phi\circ \alpha=\phi\circ(\tau-\phi)=0=(\tau-\phi)\circ \phi=\alpha\circ\phi.$ The converse part $(\Leftarrow)$ is trivial.
\end{proof}

When $p:=\dim \A<\infty $, Lemma \ref{lem:discrete-rootnilpotent} shows that any UNCP map arising from a state on $\A$ cannot have proper discrete roots of order higher than $p$. Indeed the following lemma shows that the order of such a root must be strictly less than $p$:

\begin{lemma}\label{lem:discrete-rootnilpotent-finite-dim}
Let $\A$ be a finite dimensional von Neumann algebra of dimension $p$. Let  $\tau$ be a UNCP map on $\A$. Then the following are equivalent:
\begin{itemize}
\item[(i)] $ \tau^ n=\phi:=\varphi(\cdot)\unit $ for some state $\varphi$ on $\A$ and for some $n\in \N $.
\item[(ii)] $\tau =\phi +\alpha$ for some nilpotent map $\alpha$ and $\phi:=\varphi(\cdot)\unit$ for some state $\varphi$ with $\alpha \circ \phi=0=\phi\circ \alpha $.
\item[(iii)] $0$ is an eigenvalue of $\tau$ with algebraic multiplicity $p-1$.
\item[(iv)] $\tr  \tau^k=1$ for all $k\ge 1$.
\end{itemize}
In any of these equivalent cases, $\tau$ is a root of order at most
$p-1$.
\end{lemma}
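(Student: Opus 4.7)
The plan is to prove the cycle $(\mathrm{i})\Leftrightarrow(\mathrm{ii})\Rightarrow(\mathrm{iii})\Leftrightarrow(\mathrm{iv})\Rightarrow(\mathrm{ii})$, from which the final order bound will fall out of the $(\mathrm{iii})\Rightarrow(\mathrm{ii})$ construction. The equivalence $(\mathrm{i})\Leftrightarrow(\mathrm{ii})$ is essentially Lemma~\ref{lem:discrete-rootnilpotent}: the forward direction is exactly its argument (the ``proper'' hypothesis there was never used, only the equation $\tau^n=\phi$), while for the converse, given nilpotent $\alpha$ of order $n$ with $\phi\alpha=\alpha\phi=0$ and $\phi^{2}=\phi$, the binomial expansion of $(\phi+\alpha)^{n}$ collapses to $\phi+\alpha^{n}=\phi$.

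For $(\mathrm{ii})\Rightarrow(\mathrm{iii})$ I would use $\phi\alpha=0$ to deduce $\alpha(\A)\subseteq\ker\varphi$, and combine $\alpha\phi=0$ with $\phi(\unit)=\unit$ to obtain $\alpha(\unit)=0$. With respect to the decomposition $\A=\C\unit\oplus\ker\varphi$, the map $\tau$ is then block-diagonal, acting as the identity on $\C\unit$ and as the nilpotent operator $\alpha|_{\ker\varphi}$ on the $(p-1)$-dimensional complement; its characteristic polynomial is $\lambda^{p-1}(\lambda-1)$, which yields $(\mathrm{iii})$. The step $(\mathrm{iii})\Rightarrow(\mathrm{iv})$ is then immediate: unitality forces $1$ to be an eigenvalue of $\tau$, and since algebraic multiplicities must sum to $p=(p-1)+1$, eigenvalue $1$ has multiplicity $1$ and $\tr\tau^{k}=1^k\cdot 1 + 0^k\cdot (p-1)=1$ for every $k\ge 1$. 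The converse $(\mathrm{iv})\Rightarrow(\mathrm{iii})$ I would derive from Newton's identities: writing $p_{k}=\tr\tau^{k}$ and assuming $p_{k}=1$ for all $k\ge 1$, a short induction gives $e_{1}=1$ and $e_{k}=0$ for $2\le k\le p$, so the characteristic polynomial of $\tau$ is again $\lambda^{p-1}(\lambda-1)$.

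The main obstacle is $(\mathrm{iii})\Rightarrow(\mathrm{ii})$, where one must verify that the spectral projection of $\tau$ at the eigenvalue $1$ is genuinely UNCP, not merely a linear idempotent. Linear algebra supplies the decomposition $\A=\C\unit\oplus V_{0}$, where $V_{0}$ is the $(p-1)$-dimensional generalized $0$-eigenspace and the algebraic multiplicity-$1$ condition forces the $1$-Jordan block to be one-dimensional with $\unit$ as eigenvector. The associated spectral projection $\phi$ has image $\C\unit$, hence $\phi(x)=\varphi(x)\unit$ for some linear functional $\varphi$; a short computation using $\tau$-invariance of $V_{0}$ and $\phi\tau=\tau\phi=\phi$ yields $\phi\alpha=\alpha\phi=0$ for $\alpha:=\tau-\phi$, with $\alpha|_{\C\unit}=0$ and $\alpha|_{V_{0}}$ nilpotent. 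By induction $\tau^{n}=\phi+\alpha^{n}$ for every $n\ge 1$. The key identity is then $\alpha^{p-1}=0$, giving $\tau^{p-1}=\phi$; since $\tau$ is UNCP, so is $\phi=\tau^{p-1}$, which forces $\varphi$ to be a state. This establishes $(\mathrm{ii})$ and simultaneously exhibits $\tau$ as a root of $\phi$ of order at most $p-1$, completing the proof.
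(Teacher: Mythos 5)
Your proposal is correct and follows essentially the same route as the paper: treat $\tau$ as a linear map on the $p$-dimensional space $\A$, exploit the splitting $\A=\C\unit\oplus\ker\varphi$ (equivalently the Jordan structure with a single $1\times1$ block for the eigenvalue $1$ and a nilpotent part on the $(p-1)$-dimensional complement), and read off the order bound from the maximal nilpotency order $p-1$. The only real variation is that for (iv)$\Rightarrow$(iii) you use Newton's identities to compute the characteristic polynomial from the power sums $\tr\tau^{k}=1$, whereas the paper reaches the same conclusion via a Vandermonde-determinant argument on the distinct eigenvalues; both are standard and equally valid.
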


\begin{proof}
The idea of the proof is to treat $\phi$ and $\tau$ as linear maps on $\C^p$.

(i) $\Leftrightarrow $ (ii) follows from Lemma \ref{lem:discrete-rootnilpotent}.

(i) $\Rightarrow$ (iii). As $\tau ^n=\phi$ has rank $1,$ $0$ is an eigenvalue of $\tau^n$ of multiplicity $p-1,$ hence  $0$ is an eigenvalue of $\tau $ of multiplicity $p-1.$

(iii) $\Rightarrow$ (i). Looking at the Jordan normal form of $\tau$ it is clear that  $\tau ^n$ has rank $1$ for some $n\in \N.$ Since $\tau^n$ is unital, there is a state $\varphi$ on $\A$ such that $\tau^n(x)=\varphi(x)\unit$ for all $x\in \A$.

(iii) $\Rightarrow$ (iv) is obvious as $\tau (\unit)=\unit.$

(iv) $\Rightarrow$ (iii).  Let $\lambda_1=1,\lambda_2,
\lambda_3,...,\lambda_m$ be the distinct  eigenvalues of $\tau$ with
algebraic multiplicity $a_1,a_2,...,a_m$, respectively.  From (iv)
we have $(a_1-1)\lambda_1^k+a_2\lambda_2^k+\cdots +a_m\lambda_m^k=0$
for all $k\ge 1.$  Consider the Vandermonde matrix
    $V:=(\lambda_i^{j-1})_{1\leq i,j\leq n}\in \Mat_m.$ Then as the $\lambda_i$'s are mutually distinct, we have $\det V\ne 0.$ Also note that  $V((a_1-1)\lambda_1, a_2\lambda_2,...,a_m\lambda_m)'=0. $ This implies that  $((a_1-1)\lambda_1, a_2\lambda_2,...,a_m\lambda_m)=0$. Hence $a_1=1,m=2$ and $\lambda_2=0.$ That means $0$ is an eigenvalue of $\tau$ with algebraic multiplicity $p-1.$

Now regarding our final statement, let $\tau$ be a proper $n$-th discrete root of $\phi=\varphi(\cdot)\unit$ on $\A.$ It is clear from (iii) $\Leftrightarrow$ (i) that  $n$ is the maximal possible size of all Jordan blocks of $\tau$. Hence $n\le p-1$.
\end{proof}

\begin{remark}
It is worth pointing out that a proper $n$-th discrete root $\tau$ for a state $\varphi$ is ``absorbing", namely $\psi\circ\tau^k=\varphi$, for all $k\ge n$ and all other states $\psi$. So in this case $\tau$ is also an asymptotic discrete root. The same is true for proper versus asymptotic continuous roots, as shall become clear from the following section, cf.~Proposition \ref{prop:cont-state}. In general though, there is no clear relationship between proper and asymptotic roots.
\end{remark}

Here are some examples regarding existence and non-existence of roots of UNCP maps in finite dimensions.
We start with a map which has no nontrivial proper discrete roots at all.

\begin{example}\label{ex:discrete-UCP1}
 Let $\phi:\Mat_2\ra\Mat_2$ be the UNCP map defined by
 $\phi
\begin{pmatrix}
 a&b\\c&d
\end{pmatrix}
=\begin{pmatrix}
d&0\\0&a
\end{pmatrix}$. We claim that $\phi$ has no proper discrete root. Suppose for contradiction there exists a proper $n$-th discrete root $\tau$ for $\phi$, then $\tau^n=\phi$ and $\tau\circ\phi=\tau\circ\tau^n=\tau^{n+1}=\tau^n\circ\tau=\phi\circ\tau$.
Let
\begin{gather*}
\tau\begin{pmatrix}
1&0\\0&0\end{pmatrix}=
\begin{pmatrix}a_{11}&a_{12}\\
a_{21}&a_{22}
\end{pmatrix}, \quad
\tau\begin{pmatrix}
0&1\\0&0\end{pmatrix}=
\begin{pmatrix}b_{11}&b_{12}\\
b_{21}&b_{22}
\end{pmatrix},\\
\tau\begin{pmatrix}
0&0\\1&0\end{pmatrix}=
\begin{pmatrix}c_{11}&c_{12}\\
c_{21}&c_{22}
\end{pmatrix},\quad
\tau\begin{pmatrix}
0&0\\0&1\end{pmatrix}=
\begin{pmatrix}d_{11}&d_{12}\\
d_{21}&d_{22}
\end{pmatrix}.
\end{gather*}
Since $\tau\circ\phi=\phi\circ\tau$ and $\tau(\unit)=\unit$, we have $a_{12}=a_{21}=d_{12}=d_{21}=0$ and $a_{11}=d_{22}$ and $d_{11}=a_{22}\not=0$. It follows that
\begin{gather*}
\begin{pmatrix}                                                                                       0&0\\0&1                                                                                        \end{pmatrix}
=\tau^n\begin{pmatrix}
1&0\\0&0\end{pmatrix}=
\begin{pmatrix}a_{11}^n+*&0\\
0&*
\end{pmatrix}\\
\begin{pmatrix}                                                                                       1&0\\0&0                                                                                        \end{pmatrix}
=\tau^n\begin{pmatrix}
0&0\\0&1\end{pmatrix}=
\begin{pmatrix}*&0\\
0&d_{22}^n+*
\end{pmatrix},
\end{gather*}
where all $*$'s are nonnegative terms depending on $a_{11}$ and $a_{22}$ only. In particular we see from these equalities that $a_{11}=d_{22}=0$ and the only possible solution is
\[
\tau\begin{pmatrix}
1&0\\0&0
\end{pmatrix}=
\begin{pmatrix}
0&0\\0&1
\end{pmatrix}, \quad
\tau\begin{pmatrix}
0&0\\0&1\end{pmatrix}=
\begin{pmatrix}
1&0\\0&0
\end{pmatrix},
\]
i.e., $\tau=\phi$. Thus $\phi$ has no proper $n$-th discrete root.
\end{example}

The following map has only a proper square root.

\begin{example}\label{ex:discrete-UCP2}
 Let $\phi:\Mat_2\ra\Mat_2$ be the idempotent UNCP map defined by
 $\phi
\begin{pmatrix}
 a&b\\c&d
\end{pmatrix}
=\begin{pmatrix}
a&0\\0&d
\end{pmatrix}$. Then $\phi$ has a proper square root $\tau
\begin{pmatrix}
 a&b\\c&d
\end{pmatrix}
=\begin{pmatrix}
d&0\\0&a
\end{pmatrix}$ but $\phi$ has no other proper discrete roots, which can be proved in the same style as Example \ref{ex:discrete-UCP1}.
\end{example}

Finally, a map with proper discrete roots of all orders:

\begin{example}\label{ex:discrete-UCP3}
 Let $\phi:\Mat_2\ra\Mat_2$ be the UNCP map defined by
 \[
 \phi
\begin{pmatrix}
 a&b\\c&d
\end{pmatrix}
=\begin{pmatrix}
a&\frac{b}{2}\\\frac{c}{2}&d
\end{pmatrix}.
\]
For every $n\in\N$, define
\[
\tau_{1/n}:\Mat_2\ra\Mat_2,\quad
\tau_{1/n}\begin{pmatrix}
 a&b\\c&d
\end{pmatrix}
=\begin{pmatrix}
a&\frac{b}{2^{\frac{1}{n}}}\\\frac{c}{2^{\frac{1}{n}}}&d
\end{pmatrix}.
\]
Then $\tau_{1/n}$ is a UNCP map and $\tau_{1/n}^n=\phi$, so $\tau_{1/n}$ is a proper $n$-th discrete root of $\phi$.
\end{example}

\begin{example}\label{ex:discrete-UCP4}
Let $\phi:\Mat_2\ra\Mat_2$ be the UNCP map defined by
\[
\phi
\begin{pmatrix}
a&b\\c&d
\end{pmatrix}
=\begin{pmatrix}
d&\frac{c}{2}\\\frac{b}{2}&a
\end{pmatrix}.
\]
Then $\phi$ has an $n$-th root for every odd $n\in\N\setminus\{1\}$ but not for even $n$. This is again proved in the same way as Example \ref{ex:discrete-UCP1}.
\end{example}

So we are led to the following problem:

\begin{problem}\label{prob:discrete}
Suppose $\A=\Mat_d$ or $B(\H)$ and  $\phi$ is a UNCP map on $\A$. Then for which $n\in\N$ is there a proper $n$-th discrete root of $\phi$?
\end{problem}

Though we have got some illustrative examples here, a general characterization of existence and non-existence of proper discrete roots is expected to be complicated and does involve more details about the map $\phi$, as the following subsection indicates. Similar facts have been pointed out in \cite{BC} and it matches the findings in \cite[Sec.4]{HL}.

\subsection{Proper discrete roots for states on $\Mat_d$ and $B(\H)$}

We can say much more by specializing the results of the preceding subsection to the setting of normal states on $B(\H)$ or $\Mat_d$, which we are going to do now.

\begin{theorem}\label{th:discrete-fin-state}
Suppose $d<\infty$ and $\varphi$ is a state on $\Mat_d$ of support dimension $r:=\dim(p_\phi\C^d)$. Then $\phi=\varphi(\cdot)\unit$ has a proper $n$-th discrete root on $\Mat_d$ if and only if $1< n\le d+ r^2-r -1$.
\end{theorem}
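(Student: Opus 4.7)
My plan splits into the upper bound $n\le d+r^2-r-1$ and the construction of proper $n$-th roots for every admissible $n$.

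For the upper bound, by Lemma \ref{lem:discrete-rootnilpotent} write $\tau=\phi+\alpha$ with $\alpha$ nilpotent of index exactly $n$ and $\alpha\phi=\phi\alpha=0$; these relations force $\alpha(\unit)=0$ and $\varphi\circ\alpha=0$, so $\alpha$ restricts to a nilpotent operator on $V_0:=\ker\varphi$ of dimension $d^2-1$. Fix a Kraus decomposition $\tau(x)=\sum_i V_i x V_i^*$. From $\tau(p_\phi)\ge p_\phi$ (Lemma \ref{lem:nilpotent}(i)) combined with unitality one gets $p_\phi V_i p_\phi'=0$, so every Kraus operator is block lower-triangular,
\[
V_i=\begin{pmatrix} A_i & 0 \\ B_i & C_i\end{pmatrix},\qquad A_i\in\Mat_r,\ C_i\in\Mat_{d-r}.
\]
The key step is to show that $\alpha|_W$ has nilpotency index at most $d-r$ on the $\tau$-invariant subspace $W:=\{x\in\Mat_d:p_\phi x p_\phi=0\}$. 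The compression $\gamma(y):=\sum_i C_i y C_i^*$ is a sub-unital CP map on $\Mat_{d-r}$ and is nilpotent by Lemma \ref{lem:nilpotent}(ii); any nilpotent CP map on $\Mat_k$ has index at most $k$ because $\gamma^k=0$ forces every length-$k$ product $C_{i_1}\cdots C_{i_k}$ to vanish (via Choi positivity), after which Engel's/Levitzki's theorem bounds the joint nilpotency of $\{C_i\}$ in $\Mat_k$ by $k$. Hence $C_w=0$ for every word $w$ of length $d-r$, and a direct block computation gives
\[
V_w x V_w^*=\begin{pmatrix} A_w x_{11} A_w^* & A_w x_{11}(B_w')^* \\ B_w' x_{11}A_w^* & B_w' x_{11}(B_w')^*\end{pmatrix},
\]
which depends only on $x_{11}$. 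Consequently $\tau^{d-r}$ depends only on $x_{11}$ and vanishes on $W$; since $\phi|_W=0$, this gives $\alpha^{d-r}|_W=0$. Combined with the bound $r^2-1$ on the nilpotency index of the map induced by $\alpha$ on the quotient $V_0/W$ of dimension $r^2-1$ (either by dimension or, more structurally, by Lemma \ref{lem:discrete-rootnilpotent-finite-dim} applied via Lemma \ref{lem:nilpotent}(iv) to the corner restriction on $\Mat_r$), the filtration $0\subset W\subset V_0$ forces $\alpha^{(d-r)+(r^2-1)}|_{V_0}=0$, i.e.\ $n\le d+r^2-r-1$.

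For existence, given any admissible $n$ I would decompose $n=a+b$ with $0\le a\le r^2-1$, $0\le b\le d-r$, and $a+b\ge 2$, and construct block Kraus data $(A_i,B_i,C_i)$ realizing the corresponding Jordan structure. Pick $(A_i)$ so that $\sum A_i(\cdot)A_i^*$ is a proper $a$-th root of $\varphi_r(\cdot)p_\phi$ on $\Mat_r$: such a root exists for every $a\le r^2-1$ by a small-perturbation argument, since $J(\phi_r)=D^T\otimes\unit$ is strictly positive when $\varphi_r$ is faithful, so $\phi_r+\varepsilon\alpha_r$ stays CP for any nilpotent $\alpha_r$ of index $a$ with $\alpha_r(\unit)=0$ and $\varphi_r\circ\alpha_r=0$ and $\varepsilon>0$ small. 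Pick $(C_i)$ simultaneously strictly upper triangular in $\Mat_{d-r}$ with joint nilpotency index exactly $b$. Finally tune $(B_i)$ to satisfy the unitality relations
\[
\sum A_i A_i^*=I_r,\quad \sum A_i B_i^*=0,\quad \sum B_i B_i^*+\sum C_i C_i^*=I_{d-r},
\]
while arranging the off-diagonal ``crossing'' $x_{11}\mapsto\sum A_i x_{11} B_i^*$ so that it does not annihilate the bottom of the $\alpha_r$-Jordan chain, ensuring it concatenates with a length-$b$ chain inside $W$ into a single Jordan chain of length $a+b=n$ for $\alpha$. The main obstacle is preserving complete positivity throughout: for $r<d$ the Choi matrix $J(\phi)$ has nontrivial kernel $(\ker D)^T\otimes\C^d$, so arbitrary nilpotent perturbations break CP; the block-lower-triangular form of the $V_i$'s is precisely the constraint keeping $\tau=\phi+\alpha$ in the CP cone, and one must calibrate $\|B_i\|$ small enough to preserve positivity while large enough to make the crossing nontrivial, so that the two Jordan chains actually link up into a single chain of length $n$.
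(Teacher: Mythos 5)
Your strategy is essentially the paper's: reduce to the faithful corner $p_\phi \Mat_d p_\phi \cong \Mat_r$, where the bound $r^2-1$ comes from the dimension of a complement of $\C\unit$, and control the remaining $d-r$ steps via the block-triangular form of the Kraus operators and the joint nilpotency of their $p_\phi'$-corners. Your upper-bound half is correct and, if anything, cleaner than the paper's: where the paper computes $\tau^{r'+k}$ explicitly on block matrices and checks stabilization, you package the same information into the $\alpha$-invariant filtration $0\subset W\subset V_0$ with $\dim(V_0/W)=r^2-1$ and $\alpha^{d-r}\restriction_W=\tau^{d-r}\restriction_W=0$, which immediately gives $n\le (r^2-1)+(d-r)$. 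One local fix: your justification that a nilpotent CP map on $\Mat_k$ has index at most $k$ is circular as phrased (you assume $\gamma^k=0$ to prove it). The correct order is: if $m$ is the actual index, $\gamma^m(\unit)=\sum_w C_wC_w^*=0$ kills all words of length $m$, then Levitzki/simultaneous strict triangularization kills all words of length $k$, hence $\gamma^k=0$. This is precisely \cite[Cor.2.5]{BM}, which the paper just cites.

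The genuine gap is in the existence half, which is a plan rather than a proof. The corner ingredient is fine and matches the paper's Step 1 (the Choi matrix $D^T\otimes\unit$ of $\varphi_r(\cdot)\unit$ is in the interior of the positive cone when $\varphi_r$ is faithful, so a small self-adjoint nilpotent shift of index $a$ annihilated by $\varphi_r$ and killing $\unit$ stays CP). But the crucial ``linking'' step is only asserted: you never exhibit $B_i$ satisfying the three unitality relations, never verify that the resulting $\tau$ remains CP with those $B_i$ present, and never prove that the connecting map actually concatenates the corner chain of length $a$ with the $W$-chain of length $b$ into a single Jordan chain of length exactly $a+b$ — if the chains fail to link, the index of $\alpha$ can drop to as little as $\max(a,b)$, so this is exactly where the theorem could fail. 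The tension you name (keeping $\|B_i\|$ small for positivity but the crossing nontrivial) is real and must be resolved by an explicit choice; the paper does this by taking $B_i=e_{l,i}$ with the row $l$ selected so that a specific entry of the corner iterates has not yet stabilized one step before the end, then adding a single contractive nilpotent $D_1$ to extend the order further, and checking unitality and order directly. You would also need to handle the degenerate decompositions ($a\in\{0,1\}$, forced e.g.\ when $r\le 2$ or $n=2$), where ``proper $a$-th root on $\Mat_r$'' does not literally make sense and the construction must be modified, as the paper's commutative Example \ref{ex:commutative} illustrates in the analogous situation.
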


\begin{proof}
We split the proof into two steps, depending on $r$. First of all, we may choose and fix a basis $(e_k)_{k=1,\ldots,d}$ such that $\varphi$ is in diagonal form, so $\varphi = \sum_{k=1}^d \lambda_k \langle e_k, \cdot e_k\rangle$ and $\lambda_1 \ge \ldots \ge \lambda_r > \lambda_{r+1} =0 = \ldots = \lambda_d$.

(Step 1) Suppose $r=d$, so $\varphi$ is faithful. We have to prove that $\phi$ has a proper $n$-th discrete root if and only if $1< n\le d^2-1$. First we see from Lemma \ref{lem:discrete-rootnilpotent-finite-dim} that if $\tau$ is a proper $n$-th discrete root of $\phi$ then $n\le d^2-1$. We write
\[
\alpha:=\tau-\phi,
\]
which is nilpotent of order $n$ with $\alpha(\unit)=0=\phi\circ\alpha$ owing to Lemma \ref{lem:discrete-rootnilpotent}.

Let us introduce the scalar product
\[
\langle\cdot,\cdot\rangle_{\varphi} : (x,y)\in \Mat_d\times \Mat_d \mapsto \varphi(x^* y).
\]
Then $\alpha$ restricts to a linear nilpotent map from $\Mat_d\ominus \C \unit$ into itself, and this subspace has dimension $d^2 -1$. An upper bound on the order of nilpotency is therefore $d^2-1$, so $n\le d^2-1$.

Next we would like show that we can actually attain this upper bound. To this end, consider
an orthonormal basis $(\unit, Y_1,\ldots Y_{d^2-1})$ of $\Mat_d$ with respect to $\langle\cdot,\cdot\rangle_{\varphi}$ such that $Y_i^*=Y_i$ and $\phi(Y_i)=0$, for all $i$, which can always be achieved. Then define
\[
\alpha(\unit) = \alpha(Y_{d^2-1})= 0, \quad \alpha(Y_{i}) = \eps Y_{i+1}, \quad i=1,\ldots , d^2-2,
\]
with suitable $\eps>0$ still to be determined, and
\[
\tau:=\phi + \alpha.
\]
Then it is clear that $\alpha$ is nilpotent of order $d^2-1$ and so $\tau^{d^2-1}=\phi$ because $\phi\circ\alpha=\alpha\circ\phi$ but $\tau^k\not=\phi$ for $k<d^2-1$. Moreover,  $\alpha$ is self-adjoint, namely $\alpha(x^*)=\alpha(x)^*$ for all $x\in\Mat_d$, thus is $\tau$. In order to show that $\tau$ is a proper discrete root, it remains to show that $\tau$ is CP. To this end, we compute the Choi matrix $C_\tau\in\Mat_d(\Mat_d)$ of $\tau$, cf.~\cite{Sto}, and find
\[
C_\tau = \begin{pmatrix}
\tau(e_{11}) & \ldots & \tau(e_{1 d}) \\
\vdots & & \vdots \\
\tau(e_{d1}) & \ldots & \tau(e_{d d})
\end{pmatrix}
=
\begin{pmatrix}
\tau(e_{11}^*) & \ldots & \tau(e_{ d1}^*) \\
\vdots & & \vdots \\
\tau(e_{1d}^*) & \ldots & \tau(e_{d d}^*)
\end{pmatrix}
=
\begin{pmatrix}
\tau(e_{11})^* & \ldots & \tau(e_{d 1})^* \\
\vdots & & \vdots \\
\tau(e_{1d})^* & \ldots & \tau(e_{d d})^*
\end{pmatrix}
= C_\tau^*
\]
so $C_\tau$ is self-adjoint for all $\eps$. We notice that $C_\tau$ depends continuously on $\eps$ and that for $\eps=0$, we get
\[
\begin{pmatrix}
\lambda_1 \unit & 0& \ldots & 0   \\
\vdots & && \vdots \\
0 & \ldots & 0& \lambda_d \unit
\end{pmatrix}.
\]
This matrix lies in the interior of the convex cone of positive matrices because all $\lambda_i>0$. Choosing $\eps>0$ small enough, we therefore find that $C_\tau$ must still be inside this cone. By Choi's theorem, cf.~\cite{Sto}, this implies that $\tau$ is CP, hence it is a proper discrete root of order $d^2-1$.

In order to get a proper discrete root of order $n<d^2-1$, all we have to do is change the map $\alpha$ accordingly, e.g
\[
\alpha(\unit) = \alpha(Y_n)=  \ldots= \alpha(Y_{d^2-1})=0, \quad \alpha(Y_{i}) = \eps Y_{i+1}, \quad i=1,\ldots , n-1,
\]
and proceed in the same way as above.

(Step 2) Next we examine the case $r<d$ and write $\Mat_r$ for $p_\phi\Mat_d p_\phi$. Suppose $\tau$ is a root of $\phi$. Then by Lemma \ref{lem:nilpotent}(iv),
\[
\tau' := p_\phi \tau(\cdot) p_\phi: \Mat_r \ra \Mat_r
\]
defines a proper discrete root of the faithful state $\varphi\restriction_{\Mat_r}$ on $\Mat_r$, hence its maximal order is $r^2-1$ according to (Step 1) above. As shown in Lemma \ref{lem:nilpotent}(iii), we have the following action in block decomposition:
\[
\tau \begin{pmatrix}
w & x \\ y& z
\end{pmatrix}
= \begin{pmatrix}
\tau'(w) & * \\ * & *
\end{pmatrix},
\]
in particular
\[
\tau^{r^2-1} \begin{pmatrix}
w & x \\ y& z
\end{pmatrix}
= \begin{pmatrix}
\varphi\restriction_{\Mat_r}(w) \unit & * \\ * & *
\end{pmatrix}.
\]
We therefore have to find the minimal number $n'$ such that
\begin{equation}\label{eq:discrete-n'}
\tau^{n'} \begin{pmatrix}
\unit & x \\ y& z
\end{pmatrix}
= \begin{pmatrix}
\unit & 0 \\ 0 & \unit
\end{pmatrix},
\end{equation}
for all $x,y,z$, and we claim that it can be at most $d-r$.

To this end, let us write
\[
\tau= \sum_{i=1}^N L_i^* (\cdot) L_i, \quad L_i=\begin{pmatrix}
A_i & B_i \\ C_i& D_i
\end{pmatrix}.
\]
Since
\[
0=\varphi\circ\tau\begin{pmatrix}
0 &  0 \\ 0 & \unit
\end{pmatrix}
= \sum_{i=1}^N \varphi \begin{pmatrix}
C_i^*C_i &  C_i^* D_i \\ D_i^* C_i & D_i^* D_i
\end{pmatrix}
= \sum_{i=1}^N \varphi\restriction_{\Mat_r} (C_i^*C_i)
\]
and $\varphi\restriction_{\Mat_r}$ is faithful, we obtain $C_i=0$ for all $i$. Moreover, it follows from Lemma \ref{lem:nilpotent} that $\tau\restriction_{p_\phi ' \Mat_d p_\phi '}$ is nilpotent and CP, and it follows from \cite[Cor.2.5]{BM} that the order of nilpotency is at most $\dim(p_\phi'\H) = d-r=:r'$. Therefore
\[
0 = \tau^{r'}\begin{pmatrix}
0 & 0 \\ 0 & \unit
\end{pmatrix}
= \sum_{i_1,\ldots, i_{r'}=1}^N \begin{pmatrix}
0 & 0 \\ 0 & D_{i_{r'}}^*\cdots D_{i_1}^* D_{i_1} \cdots D_{i_{r'}}
\end{pmatrix},
\]
so
\begin{equation}\label{eq:discrete-DD}
D_{i_1} \cdots D_{i_{r'}}=0,
\end{equation}
for all $i_1,\ldots,i_{r'}\in\{1,\ldots,N\}$. Moreover, unitality of $\tau'$ implies that
\begin{equation}\label{eq:discrete-AA}
\sum_{i=1}^N A_i^* A_i = \unit.
\end{equation}

We have $L_{i_1}\cdots L_{i_{k-1}}L_{i_k}=\begin{pmatrix}
  A_{i_1}\cdots A_{i_k}&M_{i_1,i_2,...,i_k}\\
  0&D_{i_1}D_{i_2}\cdots D_{i_k}
\end{pmatrix}$ for every $k\in\N$, where
\begin{align*}
M_{i_1,i_2,...,i_k} = &A_{i_1}\cdots A_{i_{k-1}}B_{i_k}+A_{i_1}\cdots A_{i_{k-2}}B_{i_{k-1}}D_{i_k}+ \ldots \\
&+ A_{i_1} B_{i_2} D_{i_3}\cdots D_{i_k} + B_{i_1} D_{i_2} \cdots D_{i_k}.
\end{align*}
Now it follows from \eqref{eq:discrete-DD} and \eqref{eq:discrete-AA} that
\[
\tau^{r'} \begin{pmatrix}
\unit & x \\ y & z
\end{pmatrix}
= \sum_{i_1,\ldots, i_{r'}=1}^N \begin{pmatrix}
A_{i_{r'}}^* \cdots A_{i_1}^* A_{i_1}\cdots A_{i_{r'}} &
A_{i_{r'}}^* \cdots A_{i_1}^*M_{i_1,i_2,\ldots,i_{r'}}\\
M_{i_1,i_2,\ldots,i_{r'}}^* A_{i_1}\cdots A_{i_{r'}} &
M_{i_1,i_2,\ldots,i_{r'}}^* M_{i_1,i_2,\ldots,i_{r'}}
\end{pmatrix}.
\]
Furthermore,
\begin{align*}
\sum_{i_0,i_1,\ldots, i_{r'}=1}^N M_{i_0, i_1,i_2,\ldots,i_{r'}}^* M_{i_0,i_1,i_2,\ldots,i_{r'}}
=& \sum_{i_0, i_1,\ldots, i_{r'}=1}^N M_{ i_1,i_2,\ldots,i_{r'}}^* A_{i_0}^* A_{i_0} M_{i_1,i_2,\ldots,i_{r'}} \\
=& \sum_{i_1,\ldots, i_{r'}=1}^N M_{i_1,i_2,\ldots,i_{r'}}^* M_{i_1,i_2,\ldots,i_{r'}}.
\end{align*}
Similarly
\begin{align*}
\sum_{i_0,i_1,\ldots, i_{r'}=1}^N M_{i_0, i_1,i_2,\ldots,i_{r'}}^* A_{i_0}A_{i_1}\cdots A_{i_{r'}}
=& \sum_{i_0, i_1,\ldots, i_{r'}=1}^N M_{ i_1,i_2,\ldots,i_{r'}}^* A_{i_0}^* A_{i_0} A_{i_1}\cdots A_{i_{r'}} \\
=& \sum_{i_1,\ldots, i_{r'}=1}^N M_{i_1,i_2,\ldots,i_{r'}}^* A_{i_1}\cdots A_{i_{r'}}.
\end{align*}
By induction we find that
\[
\tau^{r'+k} \begin{pmatrix}
\unit & x \\ y & z
\end{pmatrix}
= \tau^{r'} \begin{pmatrix}
\unit & x \\ y & z
\end{pmatrix}
= \begin{pmatrix}
\unit & 0 \\ 0 & \unit
\end{pmatrix},\quad \forall k\in\N,
\]
and together with \eqref{eq:discrete-n'} we see that $n'$ can be at most $r'$, so the order of $\tau$ on $\Mat_d$ can be at most $r^2-1+r'=r^2-1+d-r$.

It remains to show that all orders $n=2,\ldots, r^2-1+d-r$ can be attained. First of all, following the ideas in (Step 1) and given a root of order $n=1,\ldots,r^2-1$ on $\Mat_r$, there is $l=1,\ldots,r$ and $w\in\Mat_r$ such that
\[
\Big(\sum_{i_1,\ldots, i_{n-2}=1}^N A_{i_{n-2}}^* \cdots A_{i_1}^* w A_{i_1}\cdots A_{i_{n-2}}\Big)_{ll} \not=
\Big(\sum_{i_1,\ldots, i_{n-1}=1}^N A_{i_{n-1}}^* \cdots A_{i_1}^* w A_{i_1}\cdots A_{i_{n-1}}\Big)_{ll}.
\]
Then setting all $D_i=0$ and $B_i=e_{l,i}$ for $i=1,\ldots r'$, we can obtain roots of orders $n+1=2,\ldots,r^2$ on $\Mat_d$. In order to get order $n=r^2+n'$, we keep $B_i=e_{l,i}$ for $i=1,\ldots r'$ and choose for $D_1$ any contractive nilpotent matrix of order $n'+1$ and all other $D_i=0$. This way we achieve
\begin{align*}
\sum_{i_1,\ldots, i_{n'+1}=1}^N M_{i_1,i_2,\ldots,i_{n'+1}}^* M_{i_1,i_2,\ldots,i_{n'+1}}
=& \sum_{i_1,\ldots, i_{n'}=1}^N M_{i_1,i_2,\ldots,i_{n'}}^* M_{i_1,i_2,\ldots,i_{n'}}\\
\not=& \sum_{i_1,\ldots, i_{n'-1}=1}^N M_{i_1,i_2,\ldots,i_{n'-1}}^* M_{i_1,i_2,\ldots,i_{n'-1}},
\end{align*}
so in total we have a root $\tau$ of order $r^2+n'$, completing the proof of the theorem.
\end{proof}

We can adapt the construction in the preceding proof to obtain the corresponding statement in $B(\H)$ as follows:
\begin{theorem}\label{th:discrete-inf-state}
Suppose $\H$ is infinite-dimensional separable and $\varphi$ is a normal state on $B(\H)$. Then $\phi=\varphi(\cdot)\unit$ has a proper $n$-th discrete root on $B(\H)$, for every $n\in\N$.
\end{theorem}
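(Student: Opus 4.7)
The proof adapts the strategy of Theorem~\ref{th:discrete-fin-state} to the infinite-dimensional setting, the key gain being that $\H$ is infinite-dimensional and so gives unbounded freedom in the ambient dimension; this is why there is no upper bound on $n$. I would split the argument into two cases depending on whether the support dimension $r:=\dim(p_\phi\H)$ is finite or infinite.

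If $r<\infty$, I would choose a finite-rank projection $q\in B(\H)$ with $q\ge p_\phi$ and rank $d$ large enough that $n\le d+r^2-r-1$. Then $\varphi$ restricts to a state on the matrix algebra $qB(\H)q\cong\Mat_d$ with support of dimension $r$, and Theorem~\ref{th:discrete-fin-state} produces a proper $n$-th discrete root $\tilde\tau$ of $\varphi(\cdot)q$ on $qB(\H)q$. I would then define
\[
\tau(x):=\tilde\tau(qxq)+\varphi(x)(\unit-q),\qquad x\in B(\H),
\]
which is UNCP as a sum of two CP maps and unital because $\tilde\tau(q)=q$ and $\varphi(\unit)=1$. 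Since $q\ge p_\phi$ one has $\varphi(x)=\varphi(qxq)$ for all $x$, so a straightforward induction yields $\tau^k(x)=\tilde\tau^k(qxq)+\varphi(x)(\unit-q)$; consequently $\tau^n=\phi$ and $\tau^k\ne\phi$ for $k<n$, inherited from the corresponding properties of $\tilde\tau$.

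If $r=\infty$, the previous construction fails because $\varphi$ cannot be concentrated in a finite-dimensional compression, so I would mimic Step~1 of the proof of Theorem~\ref{th:discrete-fin-state} directly in $B(\H)$. Fix an orthonormal basis $(e_k)$ diagonalizing the density $D$ of $\varphi$, so $\lambda_k>0$ for all $k$, and set $Y_k:=e_{k,k+1}+e_{k+1,k}$ for $k=1,\ldots,n$. A short calculation gives $\varphi(Y_k)=0$, $\varphi(Y_k^2)=\lambda_k+\lambda_{k+1}>0$ and $\varphi(Y_jY_k)=0$ for $j\ne k$. Define the finite-rank normal map
\[
\alpha(x):=\eps\sum_{k=1}^{n-1}\frac{\varphi(Y_kx)}{\lambda_k+\lambda_{k+1}}Y_{k+1},\qquad x\in B(\H),
\]
with $\eps>0$ to be chosen, and set $\tau:=\phi+\alpha$. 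One checks $\alpha(\unit)=0=\varphi\circ\alpha$, $\alpha(Y_k)=\eps Y_{k+1}$ for $k<n$ and $\alpha(Y_n)=0$, so $\alpha$ is nilpotent of order $n$; by Lemma~\ref{lem:discrete-rootnilpotent} this makes $\tau$ a proper $n$-th discrete root of $\phi$ provided it is UNCP. The main obstacle is complete positivity of $\tau$ in this case, where Choi's theorem does not apply directly. I would handle it by taking $q:=\sum_{k=1}^{n+1}|e_k\rangle\langle e_k|$ and observing that $\alpha(x)=\alpha(qxq)\in qB(\H)q$, from which $\tau$ is block-diagonal with respect to $q$ and $\unit-q$: the block $(\unit-q)\tau(\cdot)(\unit-q)=\varphi(\cdot)(\unit-q)$ is rank-one and CP, while the block $q\tau(\cdot)q$ splits as the map $y\mapsto\varphi|_{\Mat_{n+1}}(y)q+\alpha(y)$ on $\Mat_{n+1}$ plus a manifestly CP remainder. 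The former is CP for small enough $\eps>0$ by exactly the Choi matrix continuity argument of Step~1 of the proof of Theorem~\ref{th:discrete-fin-state}, since $\lambda_1,\ldots,\lambda_{n+1}>0$ makes the relevant Choi matrix positive and invertible at $\eps=0$.
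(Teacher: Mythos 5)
Your case $r<\infty$ is correct, and it takes a different route from the paper: the paper handles $r'=\dim(p_\phi'\H)=\infty$ by placing a contractive nilpotent CP map on the corner $p_\phi'B(\H)p_\phi'$, whereas you compress to a finite-rank corner $q\ge p_\phi$ of large rank and invoke Theorem~\ref{th:discrete-fin-state} there; both work. One small point: your induction $\tau^k(x)=\tilde\tau^k(qxq)+\varphi(x)(\unit-q)$ also uses $\varphi\circ\tilde\tau=\varphi$ on $qB(\H)q$, which you should note follows from $\tilde\phi\circ\tilde\tau=\tilde\tau\circ\tilde\phi=\tilde\tau^{n+1}=\tilde\phi$ applied to $\tilde\phi=\varphi(\cdot)q$.

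The case $r=\infty$ has a genuine gap: the map $\alpha$ you write down is in general not self-adjoint, i.e.\ $\alpha(x^*)\ne\alpha(x)^*$, so $\tau=\phi+\alpha$ is not even a positive map, and no choice of $\eps>0$ repairs this. The functional $x\mapsto\varphi(Y_kx)$ is hermitian precisely when $Y_k$ commutes with the density matrix $D$, i.e.\ when $\lambda_k=\lambda_{k+1}$. For the self-adjoint element $x=\rmi(e_{k,k+1}-e_{k+1,k})$ one computes $Y_kx=\rmi(e_{k+1,k+1}-e_{kk})$, hence $\varphi(Y_kx)=\rmi(\lambda_{k+1}-\lambda_k)$, which is purely imaginary and nonzero whenever $\lambda_k\ne\lambda_{k+1}$; then $\alpha(x)=\eps\,\rmi\,\tfrac{\lambda_{k+1}-\lambda_k}{\lambda_k+\lambda_{k+1}}\,Y_{k+1}$ is skew-adjoint, and since $\varphi(x)=0$ and $\unit+x\ge0$, the image $\tau(\unit+x)=\unit+\alpha(x)$ of a positive element is not self-adjoint. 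Correspondingly, the Choi matrix of $y\mapsto\varphi\restriction_{\Mat_{n+1}}(y)q+\alpha(y)$ is not self-adjoint for $\eps>0$, so the perturbation argument of Step~1 of Theorem~\ref{th:discrete-fin-state} does not apply. The fix is exactly what that Step~1 does: complete $\unit,Y_1,\ldots,Y_n$ to a $\langle\cdot,\cdot\rangle_\varphi$-orthogonal system of \emph{self-adjoint} elements and define $\alpha$ by $Y_k\mapsto\eps Y_{k+1}$ and all other basis elements to $0$ (then the expansion coefficients of self-adjoint elements are real and $\alpha$ is a self-adjoint map); alternatively, replace your functionals by the symmetrized $x\mapsto\tfrac12\big(\varphi(Y_kx)+\varphi(xY_k)\big)$, which preserves all the orthogonality relations you use. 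You should also not claim $\lambda_k>0$ for all $k$ (false when $r'>0$); it suffices to order the basis so that $\lambda_1,\ldots,\lambda_{n+1}>0$, which $r=\infty$ permits. With these repairs your block-decomposition argument for complete positivity goes through, and this corrected second case is essentially the paper's treatment of its case $r'<\infty$.
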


\begin{proof}
Let $r=\dim (p_\phi\H)$ and $r'=\dim (p_\phi'\H)$. We distinguish two cases.

\emph{Case $r'=\infty$}. Here we choose $\alpha$ as a contractive nilpotent CP map of order $n$ on $B(p_\phi'\H)$. We define
\[
\tau \begin{pmatrix}
w & x \\ y & z
\end{pmatrix}
:= \begin{pmatrix}
\varphi\restriction_{B(p_\phi\H)}(w)\unit & 0 \\ 0 & \alpha(z) + \varphi\restriction_{B(p_\phi\H)}(w)(\unit-\alpha(\unit))
\end{pmatrix}
\]
Then $\tau$ is a proper $n$-th discrete root.

\emph{Case $r'<\infty$}. Then $r=\infty$ and we may assume as in the proof of Theorem \ref{th:discrete-fin-state} that the density matrix is in diagonal form with respect to a fixed orthonormal basis $(e_i)$ of $\H$ and with entries $\lambda_1\ge \lambda_2\ge \ldots$. Consider the projection $p_n$ onto $\spa\{e_1,\ldots,e_n\}$. Then
\[
\varphi_n := \frac{1}{\varphi(p_n)} \varphi\restriction_{B(p_n\H)}
\]
defines a faithful state on $B(p_n\H)$. We may then proceed as in (Step 1) of the proof of Theorem \ref{th:discrete-fin-state} to find a nilpotent map $\alpha_n: B(p_n\H)\ra B(p_n\H)$ of order $n$ such that $\alpha_n(p_n)=0=\varphi_n\circ\alpha_n$. We rescale $\alpha_n$ by $\varphi(p_n)$ and extend it trivially to $B(\H)\ominus B(p_n\H)$ and denote the resulting normal nilpotent map by $\alpha$. Then
\[
\tau := \phi + \alpha
\]
is a proper $n$-th discrete root of $\phi$.
\end{proof}

\subsection{Classical probability theory -- proper discrete roots of states on finite-dimensional commutative von Neumann algebras}

We would like to briefly specialize our general findings to the case of finite classical probability spaces because also here we get some interesting results. Note that a map $\tau:\mathbb{C}^d\to\mathbb{C}^d$ is UCP if and only if $\tau$ is a stochastic matrix and a map $\varphi:  \mathbb{C}^d\to \mathbb{C}$ is a state  if and only if there is a probability vector $p=(p_1,p_2,...,p_d)'\in \mathbb{C}^d$ such that $\varphi(x)=\langle p, x\rangle$, for all $x\in \mathbb{C}^d$.

In this subsection, we will use the following special notation. For $x\in \C^n, y\in \C^m$ we define $|x\rangle \langle y|:=xy^*\in \Mat_{n,m}.$ For any $x=(x_1,...,x_d)'\in \C^d$ and $m<d$, we write $x^{(m)}:=(x_1,...,x_m)' \in \C^m$. We write $\unit$ for the unit matrix but also for the unit vector $(1,\ldots,1)'\in\C^d$. Sometimes we will add subscripts or superscripts to $0$ and $\unit$ in order to indicate the space on which it is acting but we try to avoid this when it is obvious from the context.

As according to Lemma \ref{lem:discrete-rootnilpotent-finite-dim}, a state on $\mathbb{C}^d $ can have proper discrete roots only up to order $d-1$, the states on $\mathbb{C}$ and $\mathbb{C}^2$ will not have any proper discrete roots. The following example is a construction of proper $n$-th discrete roots of states  on $\mathbb{C}^d$, for all $2\le n\le d-1$ and $d>2$.

\begin{example}\label{ex:commutative} Let $d>2$ and $\varphi$ be a state on $\mathbb{C}^d$ given by a probability vector $p=(p_1,p_2,...,p_d)'$. Let  $\phi:=\varphi(\cdot)\unit.$ Then  $\phi$ is the stochastic matrix $\phi=|\unit\rangle \langle p|.$

First let us consider the  case when $\varphi $ is faithful, i.e., $p=(p_1,p_2,...,p_d)'$ with $p_i>0$, for all $i$. Let $2\le n\le d-1.$
Note that $\phi=|\unit\rangle \langle p|$ is diagonalizable and of rank one, so we can write
$\phi=S\begin{pmatrix}
1 &0 \\0 &0
\end{pmatrix}S^{-1}$, with a suitable invertible matrix $S$.
Consider a nilpotent matrix $\alpha_0\in \Mat_{d-1}$ of order $n$ and let
$\alpha=\eps S \begin{pmatrix}
0 &0\\ 0 & \alpha_0
\end{pmatrix}S^{-1}$.
If $\eps>0$ is small enough then all entries of $\phi+\alpha$ are non-negative because $\varphi$ was assumed to be faithful. By construction we have got $\phi\circ \alpha=0=\alpha\circ\phi$ and hence by Lemma \ref{lem:discrete-rootnilpotent}, $\tau=\phi+\alpha$ is a proper $n$-th discrete root of $\phi$.

Now let us assume that $\varphi$ is not faithful. Without loss of generality we can assume that $p=(p_1,p_2,...,p_r,0,...,0)', p_i>0$ for all $i=1,2,...,r<d.$ Let us consider two separate cases, namely $r\le 2$ and $2<r<d$, because our construction of $n$-th roots
works differently in these two cases.

\emph{Case $r\le 2$}. Given $r\le n\le d-1$, let
\[\tau =\begin{pmatrix}
|\unit^{(r)}\rangle \langle p^{(r)}| & 0 \\ |y(d-n)\rangle \langle e_1^{(r)}|&S_{n}
\end{pmatrix}\]
where
$y(d-n)=\begin{pmatrix}
\unit^{(d-n)} \\ 0^{(n-r)}
\end{pmatrix}\in \C^{d-r}$
and $S_{n}\in \Mat_{d-r}$ is the operator defined by $
S_{n}(e_i^{(d-r)})=0$ for $i=d-r,1,2,,...,d-n-1$ and
$S_{n}(e_i^{(d-r)})=e_{i+1}^{(d-r)}$ for $i=d-n,d-n+1,..., d-r-1$, and $e_i^{(d-r)}$ is the $i$-th canonical basis vector in $\C^{d-r}$.
Then $\tau$ is a proper $n$-th discrete root of $\phi$. (Note that
when $n=r$, we have $y(d-n)=\unit^{(d-r)}$ and $S_{n}=0$.)

\emph{Case $r>2$}.  Given $2\le n\le d-1$, decompose $n=n_1+n_2$,
with suitable $1\le n_1\le r-1$ and $1\le n_2\le d-r$. Let $
\tau_{[r,n_1]} $ be an $n_1$-th root of $|\unit^{(r)}\rangle \langle
p^{(r)}|$ as in the case of faithful $\varphi$ above. Then we define
\[\tau =\begin{pmatrix}
\tau_{[r,n_1]} & 0 \\ |y(d-n_2)\rangle \langle e_j^{(r)}|&S_{n_2}
\end{pmatrix}\] where $y(d-n_2)$ and $S_{n_2}$ are as in the previous case and $j$ is chosen as follows: if $n_1\ge 2$ then choose $j$ such that the $j$-th row of $\tau_{[r,n_1]}^{n_1-1}$ is different from $p^{(r)'}$, while for $n_1=1$ we choose $j=1$. Then $\tau$ is a proper $n$-th discrete root of $\phi$.

\end{example}

We summarize the result of the preceding example as follows:

\begin{theorem}
A state $\varphi$ on $\C^d$ has a proper $n$-th discrete root if and only if $2\le n\le d-1$. Or in more probabilistic terms: given a probability distribution $p$ on a probability space with $d$ elements, there is a stochastic map $S$ that leaves $p$ invariant and such that $S^n = |\unit\rangle \langle p|$ and $S^k \not= |\unit\rangle \langle p|$ for $k<n$ if and only if $2\le n\le d-1$.
\end{theorem}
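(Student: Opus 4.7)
The plan is to combine the results already established to get both directions, essentially collecting Lemma \ref{lem:discrete-rootnilpotent-finite-dim} and Example \ref{ex:commutative} into one statement.

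For the ``only if'' direction, I would note that $\C^d$ is a commutative von Neumann algebra of dimension $p = d$, so Lemma \ref{lem:discrete-rootnilpotent-finite-dim} immediately gives that any proper $n$-th discrete root of $\phi = \varphi(\cdot)\unit$ has order at most $p-1 = d-1$. The constraint $n \ge 2$ is forced by the definition of \emph{proper} root, which excludes $n = 1$ (that would mean $\tau = \phi$). So the necessary condition $2 \le n \le d-1$ is immediate.

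For the ``if'' direction, I would argue that the constructions in Example \ref{ex:commutative} already provide explicit proper $n$-th discrete roots of $\phi = |\unit\rangle\langle p|$ for every $2 \le n \le d-1$. The argument splits based on the support of $p$: if $\varphi$ is faithful, a small perturbation $\tau = \phi + \varepsilon \alpha$ by a rank-$(d-1)$ nilpotent map of order $n$ in the kernel of $\phi$ does the job (using Lemma \ref{lem:discrete-rootnilpotent} to ensure $\tau^n = \phi$ and the faithfulness to guarantee non-negative entries); if $\varphi$ has support of size $r$, one uses a block lower-triangular construction in $\C^r \oplus \C^{d-r}$ that combines a root on the faithful block with a shift $S_n$ on the complement, so that powers shift the mass into the supported block and eventually stabilize to $|\unit\rangle\langle p|$ at step $n$ but not before.

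What remains to verify in writing, and what I expect to be the main bookkeeping obstacle, is that the block constructions in Example \ref{ex:commutative} genuinely satisfy all three required properties simultaneously: they are stochastic (rows non-negative and summing to $1$), they satisfy $\tau^n = \phi$, and $\tau^k \neq \phi$ for $k < n$. For the block matrix
\[
\tau = \begin{pmatrix} \tau_{[r,n_1]} & 0 \\ |y(d-n_2)\rangle \langle e_j^{(r)}| & S_{n_2} \end{pmatrix},
\]
the key computation is that, since $S_{n_2}$ is nilpotent of order $n_2 + 1$ and shifts basis vectors, iterated composition moves the off-diagonal block ``upward'' by the shift until it saturates and becomes $|\unit^{(d-r)}\rangle\langle p^{(r)}|$; choosing the index $j$ so that the $j$-th row of $\tau_{[r,n_1]}^{n_1-1}$ differs from $p^{(r)'}$ ensures that at the critical step $k = n_1 + n_2 - 1$ the matrix $\tau^k$ still deviates from $|\unit\rangle\langle p|$, while at step $n = n_1 + n_2$ the diagonal has stabilized to $|\unit^{(r)}\rangle\langle p^{(r)}|$ and the off-diagonal has fully saturated. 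The stochasticity reduces to $y(d-n_2)$ having only $0$ and $1$ entries and $|\unit^{(d-n_2)}\rangle\langle e_j|$ being row-stochastic on the support rows, with $S_{n_2}$ row-substochastic, so $\tau$ is row-stochastic.

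Putting both directions together yields the equivalence, and the probabilistic reformulation follows at once since UCP maps on $\C^d$ are exactly stochastic matrices and the invariance $Sp = p$ is automatic from $\phi \circ \tau = \phi$ (equivalently $\tau^{n+1} = \tau^n$, applied to the dual).
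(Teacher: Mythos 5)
Your proposal is correct and follows essentially the same route as the paper, which also obtains the upper bound $n\le d-1$ from Lemma \ref{lem:discrete-rootnilpotent-finite-dim} and the attainability of every $2\le n\le d-1$ from the explicit constructions in Example \ref{ex:commutative}. The only difference is that you spell out the verification of stochasticity and of $\tau^n=\phi$, $\tau^k\neq\phi$ for the block construction, which the paper leaves implicit in the example.
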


Here $\varphi$ may be regarded as a stochastic matrix of rank $1$. For stochastic matrices of rank $>1$, we have no complete and simple characterization though some partial characterizations with necessary or sufficient conditions are known, e.g. in \cite{HL}. The case of rank $d$ is closely related to Elfving's embedding problem \cite{Elf,Dani}.

\section{Proper continuous roots}\label{sec:continuous}

We continue to use the notation from Section \ref{sec:discrete}.

\begin{definition}\label{def:continuous}
Given a von Neumann algebra $\A$ and a UNCP map $\phi:\A\ra \A$, a \emph{proper continuous root} of $\phi$ is a strongly-continuous one-parameter semigroup $(\tau_t)_{t\ge 0}$ of UNCP maps on $\A$ such that $\tau_1 = \phi$ and $\tau_t\not=\phi$, for all $0<t<1$.
\end{definition}

In this definition one might also consider seemingly more general semigroups with $\tau_{t_0}=\phi$ for some $t_0>0$. However, since we can always reduce the situation to the case $t_0=1$ by rescaling, we decided to keep things simple and consider only the case $t_0=1$. For more information on strongly continuous one-parameter semigroups in general, we refer the reader to \cite{Arv-book,Dav}.

\begin{proposition}\label{prop:root-bijective}
Let $\A$ be a finite-dimensional von Neumann algebra and $\phi:\A\ra\A$ a UNCP map. Then the following are equivalent:
\begin{itemize}
    \item[(i)] $\phi$ has a proper continuous root;
    \item[(ii)] $\phi$ is bijective and has a proper $n$-th discrete root, for every $n\in\N\setminus\{1\}$.
\end{itemize}
\end{proposition}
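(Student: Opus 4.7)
For $(\mathrm{i})\Rightarrow(\mathrm{ii})$: In finite dimension, a strongly continuous one-parameter semigroup of bounded operators is automatically norm-continuous, so $\tau_t=\rme^{tL}$ for some bounded $L\in\End(\A)$; hence $\phi=\tau_1=\rme^L$ is bijective as the exponential of a matrix. For each $n\ge 2$, set $\sigma_n:=\tau_{1/n}$: this is a UNCP map with $\sigma_n^n=\tau_1=\phi$, and $\sigma_n^k=\tau_{k/n}\ne\phi$ for $1\le k<n$ by properness of $(\tau_t)$. So $\sigma_n$ is a proper $n$-th discrete root.

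For $(\mathrm{ii})\Rightarrow(\mathrm{i})$: Fix for each $m\in\N$ a proper $m!$-th discrete root $\sigma_m$ of $\phi$, and for $r=k/m!\in\Q_{\ge 0}$ with $k\in\NN$ set $\tau^{(m)}_r:=\sigma_m^k$; then $\tau^{(m)}_r\circ\tau^{(m)}_s=\tau^{(m)}_{r+s}$ and $\tau^{(m)}_1=\phi$ whenever defined. The set $\mathcal{U}\subset\End(\A)$ of UNCP maps on $\A$ is compact, convex and finite-dimensional, so a Cantor diagonal extraction over the countable index set $\Q_{\ge 0}$ furnishes a subsequence $(m_j)$ along which $\tau^{(m_j)}_r\to\tau_r\in\mathcal{U}$ for every $r\in\Q_{\ge 0}$; the semigroup identity, together with $\tau_0=\id$ and $\tau_1=\phi$, survives in the limit.

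The crux is extending $(\tau_r)_{r\in\Q_{\ge 0}}$ to a strongly continuous semigroup on $\R_{\ge 0}$, which reduces to showing that $\tau_r\to\id$ as $r\downarrow 0$ along $\Q_{\ge 0}$. Bijectivity of $\phi$ enters here in an essential way: given any norm-accumulation point $\nu\in\mathcal{U}$ of $(\tau_{r_k})$ with $r_k\downarrow 0$, extract a further subsequence so that, using compactness inside the closed semigroup $S:=\overline{\{\tau_r:r\in\Q_{\ge 0}\}}\subset\mathcal{U}$ and the identity $\tau_{s+r_k}=\tau_s\circ\tau_{r_k}$ for each rational $s$, one exhibits $\nu$ as a right factor satisfying $\tau_s\circ\nu=\tau_s$ inside $S$. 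Specializing to $s=1$ and right-cancelling by the bijective $\phi=\tau_1$ forces $\nu=\id$. Continuity on $\Q_{\ge 0}$ then propagates through the semigroup identity, the extension to $\R_{\ge 0}$ is by density and uniform continuity on compact intervals, and the extended $(\tau_t)_{t\ge 0}$ is automatically norm-continuous with bounded generator in finite dimension. Properness on $(0,1)$ is finally guaranteed by $\phi\ne\id$ (otherwise $\phi$ admits no proper discrete roots, contradicting (ii)) together with a harmless rescaling $t\mapsto\tau_{ct}$ should any accidental coincidence $\tau_{t_0}=\phi$ with $0<t_0<1$ arise.

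The main obstacle is precisely the identification $\nu=\id$ of the accumulation point at $0$, which is where the hypothesis that $\phi$ is bijective is indispensable; its failure for non-bijective idempotent $\phi$ is consistent with Theorem \ref{th:asymptotic}, which in that case produces only asymptotic, and in general not proper, continuous roots.
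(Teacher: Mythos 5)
Your proof of (i) $\Rightarrow$ (ii) is correct and is essentially the paper's argument: norm-continuity in finite dimensions gives $\tau_t=\rme^{t\L}$, so $\phi=\rme^{\L}$ is invertible and $\tau_{1/n}$ is a proper $n$-th discrete root. For (ii) $\Rightarrow$ (i) you depart from the paper, which does not run a compactness argument at all: it invokes Denisov's structure theorem for infinitely divisible UCP maps in finite dimensions (\cite[Cor.4]{Den}), writing $\phi=\rme^{\L}E$ with $E$ a conditional expectation, and then uses bijectivity only to force $E=\id$. Your attempt to reprove this from scratch has a genuine gap, described next.

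The gap is the identification $\nu=\id$ of the accumulation points of $\tau_r$ as $r\downarrow 0$. Your mechanism for producing the relation $\tau_s\circ\nu=\tau_s$ is circular: from $\tau_{s+r_k}=\tau_s\circ\tau_{r_k}$ and $\tau_{r_k}\ra\nu$ you only get $\lim_k\tau_{s+r_k}=\tau_s\circ\nu$, and to conclude that this limit equals $\tau_s$ you would need precisely the continuity of $r\mapsto\tau_r$ at $s$ that you are trying to establish. Worse, the statement you need is actually false for an arbitrary choice of the discrete roots $\sigma_m$, so no purely formal manipulation can close the gap. Take $\phi=\Ad u$ on $\Mat_2$ with $u=\diag(1,\rme^{\rmi\theta})$, which is bijective and infinitely divisible. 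For each $m$ you may choose $\sigma_m=\Ad v_m$ with $v_m=\diag(1,\rme^{\rmi(\theta+2\pi k_m)/m!})$ for any integers $k_m$; choosing $k_m$ to grow suitably, the maps $\tau^{(m)}_{1/m!}=\sigma_m$ accumulate on a whole circle of automorphisms $\Ad\diag(1,\rme^{\rmi\beta})$ rather than on $\{\id\}$, and the limiting rational semigroup obtained by your diagonal extraction is then genuinely discontinuous at $0$ while still satisfying $\tau_0=\id$, $\tau_1=\phi$ and the semigroup law on $\Q_{\ge 0}$. (Note also that the classical device behind Kingman's proof --- a stochastic matrix with determinant near $1$ is near the identity --- is unavailable here: $\Ad u$ has determinant $1$ as a linear map on $\Mat_2$ without being the identity.) To repair the argument you must either make a coherent, compatible choice of the roots $\sigma_m$ before passing to the limit, or appeal to a structural result such as Denisov's, which is what the paper does. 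The remaining steps (extension from $\Q_{\ge 0}$ to $\R_{\ge 0}$, and the rescaling to ensure $\tau_t\ne\phi$ for $0<t<1$) are fine once continuity at $0$ is secured.
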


\begin{proof}
(i) $\Rightarrow$ (ii). If $(\tau_t)_{t\ge 0}$ is a proper continuous root, then it must be a uniformly continuous UNCP semigroup, hence of the form $\tau_t=\rme^{t\L}$ with some (bounded) conditionally completely positive generator $\L$, \cf \cite[Sec.4.5]{EK}, so $\rme^{-\L}$ is an inverse of $\phi$ (in the sense of linear maps on $\A$) and $\tau_{1/n}$ is a proper $n$-th discrete root of $\phi$, for every $n\in\N$.

(ii) $\Rightarrow$ (i). If $\phi$ has a proper $n$-th discrete root for every $n\in\N$ (this is called infinitely divisible in \cite{Den}) then according to \cite[Cor.4]{Den} there are a conditional expectation $E:\A\ra\A$ and a conditionally completely positive generator $\L$ such that $\phi=\rme^{\L} E$. Since $\phi$ and $\rme^{\L}$ are invertible, so is $E$ and hence $E$ must be the identity map because $\A$ is finite-dimensional. Thus we may choose $\tau_t=\rme^{t\L}$, for all $t\ge 0$, to obtain a proper continuous root of $\phi$.
\end{proof}

\begin{remark}
In the classical case, namely if $\A$ is commutative, $\phi$ is automatically bijective if it has a proper $n$-th discrete root for every $n\in\N$. This is one of the characterizations of Markovianity in the context of Elfving's embedding problem due to Kingman \cite[Prop.7]{Kin}. On the other hand, in the noncommutative case, bijectivity is not automatic. E.g. consider
\[
\phi:\Mat_3\ra\Mat_3, \quad \phi(x)=\begin{pmatrix}
x_{11} & 0 & 0\\
0 & x_{22} & x_{23}\\
0 & x_{32} & x_{33}
\end{pmatrix}.
\]
This has proper $n$-th roots for all $n$ but is clearly not bijective.
Similarly, we see that the UNCP map
\[
\phi:\Mat_2\ra\Mat_2, \quad \phi
\begin{pmatrix}
a & b\\
c & d\\
\end{pmatrix}
=
\begin{pmatrix}
d & c/2\\
b/2 & a\\
\end{pmatrix}
\]
from Example \ref{ex:discrete-UCP4} is bijective but has proper $n$-th roots only for odd $n\in\N\setminus\{1\}$, hence it has no proper continuous root.
\end{remark}

The following example provides a bijective UNCP map in finite dimensions where the conditions in the proposition are verified. In fact, it is a simple ``interpolation" of the construction in Example \ref{ex:discrete-UCP3}:

\begin{example}\label{ex:cont-UCP}
 Let $\phi:\Mat_2\ra\Mat_2$ be the UNCP map defined by
 \[
 \phi
\begin{pmatrix}
 a&b\\c&d
\end{pmatrix}
=\begin{pmatrix}
a&\frac{b}{2}\\\frac{c}{2}&d
\end{pmatrix}.
\]
For every $t\in[0,\infty)$, define
\[
\tau_{t}:\Mat_2\ra\Mat_2,\quad
\tau_{t}\begin{pmatrix}
 a&b\\c&d
\end{pmatrix}
=\begin{pmatrix}
a&\frac{b}{2^t}\\\frac{c}{2^t}&d
\end{pmatrix}.
\]
Then $(\tau_t)_{t\ge 0}$ is a proper continuous root of $\phi$, namely $\tau_1=\phi$ and the semigroup property and continuity are a straight-forward verification.
\end{example}

Embedding this example into a higher (possibly infinite) dimensional space, we can get continuous roots for certain UNCP maps in higher dimensions as well. A more complete criterion as to when such continuous roots exist seems out of reach. Notice that this might be even more difficult than Problem \ref{prob:discrete}.

Yet if $\phi$ arises from a state, we can say a little bit more:

\begin{proposition}\label{prop:cont-state}
Let $\A$ be a von Neumann algebra, $\varphi$ a state on $\A$ and $\phi=\varphi(\cdot)\unit$. If $(\tau_t)_{t\ge 0}$ is a proper continuous root of $\phi$ then
\begin{itemize}
\item[(i)] $\varphi\circ\tau_t = \varphi$, for every $t\ge 0$, i.e., $\varphi$ is $\tau$-invariant;
\item[(ii)] $\psi\circ\tau_t=\phi$, for every $t\ge 1$ and every UNCP map $\psi$, i.e., all UNCP maps composed with $\tau_t$ converge to $\phi$ in finite time; in particular, $\tau_t=\phi$, for all $t\ge 1$.
\end{itemize}
\end{proposition}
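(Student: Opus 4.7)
The plan is to exploit the semigroup property together with the rank-one form $\phi = \varphi(\cdot)\unit$ and unitality of all $\tau_t$; the arguments are really short once the right commutation is noted, so I would not expect any genuine obstacle.

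For part (i), I would first note that the semigroup law gives
$\phi\circ\tau_t = \tau_1\circ\tau_t = \tau_{1+t} = \tau_t\circ\tau_1 = \tau_t\circ\phi$,
so $\phi$ commutes with every $\tau_t$. Evaluating both sides at $x\in\A$ and using that $\phi(y)=\varphi(y)\unit$ and $\tau_t(\unit)=\unit$, I get
$\varphi(\tau_t(x))\unit = \tau_t(\varphi(x)\unit) = \varphi(x)\tau_t(\unit) = \varphi(x)\unit$,
which is precisely $\varphi\circ\tau_t = \varphi$.

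For part (ii), I would write $t=1+s$ with $s\ge 0$ and use the semigroup law in the form $\tau_t = \tau_s\circ\tau_1 = \tau_s\circ\phi$. Applying this to $x\in\A$,
\[
\tau_t(x) = \tau_s(\varphi(x)\unit) = \varphi(x)\tau_s(\unit) = \varphi(x)\unit = \phi(x),
\]
so $\tau_t=\phi$ for all $t\ge 1$. Finally, for any UNCP map $\psi$,
\[
\psi\circ\tau_t(x) = \psi(\varphi(x)\unit) = \varphi(x)\psi(\unit) = \varphi(x)\unit = \phi(x),
\]
using unitality of $\psi$. This proves (ii). The only ingredients are the semigroup identity, the rank-one structure of $\phi$, and unitality of $\tau_t$ and $\psi$; no continuity or finite-dimensionality is needed here.
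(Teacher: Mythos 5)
Your proof is correct and follows essentially the same route as the paper's: both parts rest on the semigroup identity $\tau_{1+t}=\tau_1\circ\tau_t=\tau_t\circ\tau_1$, the rank-one form $\phi=\varphi(\cdot)\unit$, and unitality of $\tau_s$ and $\psi$. The only cosmetic difference is that you first establish $\tau_t=\phi$ for $t\ge 1$ and then compose with $\psi$, whereas the paper carries out the composition in a single chain; the content is identical.
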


\begin{proof}
(i) Since $\phi=\tau_1$, for all $t\ge 1$, we get from the linearity and the semigroup properties of $\tau$:
\[
\varphi\circ\tau_t (x) \unit = \tau_1\circ \tau_t (x) = \tau_{t+1}(x) = \tau_t \circ \tau_1 (x)
= \tau_t( \varphi(x) \unit) = \varphi(x) \tau_t(\unit) = \varphi(x) \unit, \quad x\in \A.
\]

(ii) For all $t\ge 1$ and $x\in \A$, we have, using the unitality and the semigroup property of $\tau$:
\[
\psi \circ \tau_t (x) = \psi \circ\tau_{t-1}(\tau_1(x)) = \psi\circ\tau_{t-1} (\varphi(x) \unit) = \varphi(x) \psi\circ\tau_{t-1}(\unit) = \varphi(x) \psi(\unit)
= \phi(x).
\]
\end{proof}

The property that $(\tau_t)_{t\ge 0}$ stabilizes after time $t=1$ is very particular to states, \cf Example \ref{ex:cont-UCP} for a counter-example. In the special case where $\phi$ arises from a state and moreover $\A=B(\H)$, we can provide a partial classification of proper continuous roots:

\begin{theorem}\label{th:cont-state}
Let $\A=B(\H)$ with $\H$ infinite-dimensional, $\varphi$ a normal state on $\A$ and $\phi=\varphi(\cdot)\unit$.
\begin{itemize}
\item[(i)] If $\dim (p_\phi\H)=1$, i.e.~$\varphi$ is a pure state, then $\phi$ has a proper continuous root.
\item[(ii)] If $1<\dim (p_\phi\H) <\infty$, i.e.~$\varphi$ is a finite convex combination of (at least two) pure states, then $\phi$ has no proper continuous root.
\item[(iii)] If $\dim (p_\phi\H)=\infty$, i.e.~$\varphi$ is an infinite convex combination of pure states, and moreover $0<\dim (p_\phi'\H) <\infty$ then $\phi$ has no proper continuous root.
\end{itemize}
\end{theorem}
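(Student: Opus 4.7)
For (i), I would invoke the construction recalled in the Introduction due to Bhat \cite{Bhat12}: for a normal pure state $\varphi$ on $B(\H)$ with $\H$ infinite-dimensional there exists a strongly continuous UNCP semigroup $(\sigma_t)_{t\ge 0}$ on $B(\H)$ with $\sigma_t=\phi$ for every $t\ge 1$. Setting $t_*:=\inf\{t>0:\sigma_t=\phi\}$, which is strictly positive because $\sigma_0=\id\ne\phi$, the rescaled semigroup $\tau_t:=\sigma_{t_* t}$ satisfies $\tau_1=\phi$ and $\tau_t\ne\phi$ for $0<t<1$, hence is a proper continuous root.

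For (ii) and (iii) the strategy is common: assume for contradiction that a proper continuous root $(\tau_t)_{t\ge 0}$ of $\phi$ exists. A first observation is that for every $n\in\N\setminus\{1\}$, $\tau_{1/n}$ is itself a proper $n$-th discrete root of $\phi$ in the sense of Definition \ref{def:discrete}, because otherwise some $\tau_{k/n}$ with $k<n$ would already equal $\phi$, contradicting $\tau_t\ne\phi$ on $(0,1)$. Consequently Lemma \ref{lem:nilpotent} applies to each $\tau_{1/n}$. The key idea is then to compress $(\tau_t)$ into a finite-dimensional corner algebra and exploit the fact that any strongly continuous semigroup of linear maps on a finite-dimensional space is automatically uniformly continuous, of the form $\rme^{tL}$, and therefore consists of invertible maps.

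For (ii), with $r:=\dim (p_\phi\H)<\infty$, I would set $\tilde{\tau}_t(x):=p_\phi\tau_t(x)p_\phi$ for $x\in\Mat_r:=p_\phi B(\H)p_\phi$. Lemma \ref{lem:nilpotent}(iii)--(iv) applied to each $\tau_{1/n}$ gives that the maps $\tilde{\tau}_{m/n}$ form a UNCP semigroup on $\Mat_r$ at rational times; strong continuity of $(\tau_t)$ and density of the rationals extend this to a strongly continuous UNCP semigroup $\tilde{\tau}_t=\rme^{t\tilde{\L}}$ on $\Mat_r$, all of whose members are invertible. But $\tilde{\tau}_1(x)=p_\phi\phi(x)p_\phi=\varphi(x)p_\phi$ is a rank-one map on $\Mat_r$, whereas an invertible linear endomorphism of $\Mat_r$ has rank $r^2>1$, contradiction. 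For (iii), with $r':=\dim (p_\phi'\H)<\infty$, I would instead compress to the lower-right corner: Lemma \ref{lem:nilpotent}(ii) applied to each $\tau_{1/n}$ shows that it maps $p_\phi'B(\H)p_\phi'\cong\Mat_{r'}$ into itself, and the analogous rational-to-real extension produces a strongly continuous CP semigroup $\alpha_t=\rme^{t\L'}$ on $\Mat_{r'}$, again consisting of invertible maps. Yet $\alpha_1=0$ because $\tau_1=\phi$ vanishes on $p_\phi'B(\H)p_\phi'$, which is compatible only with $r'=0$, contradicting the hypothesis.

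The main obstacle I foresee is this propagation step: Lemma \ref{lem:nilpotent} provides the required block-structural information only for a single discrete root, and one has to lift it to the full continuous family $(\tau_t)$, first at rational times by iteration and the semigroup law and then at arbitrary times via strong continuity and density. Once the compressed semigroups are seen to live on a finite-dimensional corner algebra, the elementary fact that an exponential of a bounded operator is invertible delivers both contradictions essentially for free.
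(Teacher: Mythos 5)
Your proposal is correct and follows essentially the same route as the paper: part (i) rests on the construction from \cite{Bhat12} (which the paper reproduces explicitly via the nilpotent shift semigroup on $L^2[0,1]$, making your rescaling step unnecessary there but harmless), and parts (ii) and (iii) compress to the corners $p_\phi\A p_\phi$ and $p_\phi'\A p_\phi'$ exactly as the paper does, deriving the contradiction from the invertibility of $\rme^{t\L}$ on a finite-dimensional corner versus the non-injectivity of the compressed $\phi$ (the paper packages the corner argument for (ii) as an appeal to Proposition \ref{prop:root-bijective}). Your explicit treatment of the propagation of Lemma \ref{lem:nilpotent} from the discrete roots $\tau_{1/n}$ to the whole family via rational times and strong continuity is a point the paper passes over with ``as in Lemma \ref{lem:nilpotent}(iv)'', and it is handled correctly.
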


\begin{proof}
(i). This is taken from \cite[Ex.1.3]{Bhat12} and included here for the sake of completeness. Since $\varphi$ is pure, we can write $\varphi= \langle \xi, \cdot \xi\rangle$, where $\xi$ is a suitable vector in $\H$. We decompose $\H = \C\xi \oplus L^2[0,1]$, so $p_\phi$ is the projection onto the first, $p_\phi'$ the projection onto the second component. Let $(S_t)_{t\ge 0}$ be the standard nilpotent right-shift semigroup on $L^2[0,1]$  defined as follows: for $f\in L^2[0,1]$, $t\in [0,\infty)$ and $s\in[0,1]$, set
\begin{equation}\label{eq:cont-shift}
S_t(f)(s) =\left\{
\begin{array}{l@{\; :\;}l}
f(s-t) & s-t\in [0,1]\\
0 & \textrm{otherwise}.
\end{array}
\right.
\end{equation}

Then with respect to the decomposition $\H = \C\xi \oplus L^2[0,1]$, define
\[
\tau_t: B(\H) \ra B(\H),\quad  \begin{pmatrix}
x_{11} & x_{12} \\ x_{21} & x_{22}
\end{pmatrix}
\mapsto
\begin{pmatrix}
x_{11} &  x_{12} S_t^* \\
S_t x_{21} & S_t x_{22} S_t^* + x_{11} (\unit-S_tS_t^*)
\end{pmatrix}.
\]
This can be written as
\[
\tau_t \begin{pmatrix}
x_{11} & x_{12} \\ x_{21} & x_{22}
\end{pmatrix}
=
\tau_t(x) = (1\oplus S_t) x (1\oplus S_t)^* + \varphi(x) \big(\unit - (1\oplus S_t)(1\oplus S_t)^*\big),
\]
and it is straight-forward to verify that $(\tau_t)_{t\ge 0}$ is a strongly continuous semigroup, every $\tau_t$ is UNCP and $\tau_1(x)= \varphi(x) \unit=\phi(x)$. Thus $(\tau_t)_{t\ge 0}$ forms a proper continuous root of $\phi$.

(ii)   Suppose a proper continuous root $(\tau_t)_{t\ge 0}$ of $\phi$ existed. As in Lemma \ref{lem:nilpotent}(4) we see that $(p_\phi\tau_t(\cdot)p_\phi)_{t\ge 0}$ restricts to a continuous root of $p_\phi\phi(\cdot)p_\phi$ on $p_\phi \A p_\phi$.  However, we know from Proposition \ref{prop:root-bijective} that such a continuous root cannot exist because $p_\phi\phi(\cdot)p_\phi$ is not bijective on $p_\phi \A p_\phi$, so we reach a contradiction. Thus $\phi$ cannot have a proper continuous root.

(iii) Suppose for contradiction a proper continuous root $(\tau_t)_{t\ge 0}$ of $\phi$ existed. Since $\tau_t(p_\phi')\le p_\phi'$ according to Lemma \ref{lem:nilpotent}(i), we see that $(\tau_t\restriction_{p_\phi'\A p_\phi'})_{t\ge 0}$ forms an NCP semigroup, and according to Lemma \ref{lem:nilpotent}(ii), it is nilpotent with $\tau_1\restriction_{p_\phi'\A p_\phi'} =0$. If $0<\dim(p_\phi')<\infty$, a CP semigroup must be of the form $(\rme^{t\L})_{t\ge 0}$ with bounded conditionally CP map $\L$. Then $\rme^{-\L}$ is the inverse of $\rme^\L$ (as a linear map), so we get $0=\tau_1\restriction_{p_\phi'\A p_\phi'} = \rme^\L \not= 0$, which is a contradiction, so $\phi$ cannot have a proper continuous root.

\end{proof}

\begin{problem}\label{prob:inf-state}
In the setting of Theorem \ref{th:cont-state}, does $\phi$ have a proper continuous root in the following two missing cases
\begin{itemize}
\item[(iv)] $\dim p_\phi = \infty$ with $\dim p_\phi'=0$;
\item[(v)] $\dim p_\phi = \infty$ with $\dim p_\phi'=\infty$?
\end{itemize}
We wish to point out that the two cases are equivalent, so it suffices to study (iv).
\end{problem}

\begin{remark}
In \cite{Bhat12}, the roots in case (i) of Theorem \ref{th:cont-state} have been completely classified in terms of $E_0$-semigroups in standard form, cf.~\cite{Arv-book} and \cite[Def.2.12]{Pow}.
\end{remark}

\begin{remark}
A similar construction can be used in order to get a proper continuous root $(T_t)_{t\ge 0}$ of a pure state on an uncountable classical probability space $C[0,1]$, namely consider
\[
T_t:C[0,1]\ra C[0,1], \quad T_t f(s) = \left\{
\begin{array}{l@{\; :\;}l}
f(s-t) & s-t\ge 0\\
f(0) & \mbox{otherwise}.
\end{array}
\right.
\]
A pure state on $C[0,1]$ corresponds to an evaluation functional $\ev_x$, with some $x\in[0,1]$. Then $\ev_x\circ T_t$ equals a pure state at all times $t\in[0,1]$, in particular $\ev_x\circ T_1=\ev_0$. In contrast, in the noncommutative case of $\A=B(\H)$ as in Theorem \ref{th:cont-state}(i) suppose $\psi\not=\varphi$ is another pure state. Then $\psi\circ\tau_t$ equals the pure states $\psi$ at time $t=0$ and $\varphi$ at $t=1$ but in between it is a convex combination of two pure states depending on $t$. Moreover, for countable classical states space, we expect that no proper continuous root exists at all. This indicates a stark difference between the commutative and the noncommutative setting.
\end{remark}

\noindent\textbf{Acknowledgments.} BVRB thanks S.~Kirkland for a mathematical idea which helped us to construct Example \ref{ex:commutative}. We also thank T.~Cubitt and M.~Skeide for helpful comments on a former version of this manuscript. BVRB furthermore acknowledges financial support from  J.C.~Bose Fellowships. RH thanks the Indian Statistical Institute for the hospitality he received during research visits. NM thanks the Department of Atomic Energy, Government of India, for financial support and the IMSc Chennai for providing the necessary facilities. VU thanks the National Board for Higher Mathematics, India, for his PhD fellowship.

\end{document}